\numberwithin{equation}{section}
\newtheorem{theorem}{Theorem}[section]
\newtheorem{proposition}[theorem]{Proposition}
\newtheorem{lemma}[theorem]{Lemma}
\theoremstyle{definition}
  \newcommand{\rr}{\mathbb R}
\newcommand{\eps}{\epsilon}
\renewcommand{\(}{\left(}
\renewcommand{\)}{\right)}
\def\sideremark#1{\ifvmode\leavevmode\fi\vadjust{\vbox to0pt{\vss
 \hbox to 0pt{\hskip\hsize\hskip1em
 \vbox{\hsize2.1cm\tiny\raggedright\pretolerance10000
  \noindent #1\hfill}\hss}\vbox to15pt{\vfil}\vss}}}%
\begin{document}

\title[Blow-up for sign-changing solutions of the critical heat equation]{Blow-up for sign-changing solutions of the critical heat equation  in domains with a small hole}

\author[I.\ Ianni]{Isabella Ianni}
\author[M.\ Musso]{Monica Musso}
\author[A.\ Pistoia]{Angela Pistoia}

\address[Isabella Ianni]{
Dipartimento di Matematica e Fisica,
Seconda Universit\`a degli Studi di Napoli,
Viale Lincoln 5, 81100 Caserta, Italy}
\email{isabella.ianni@unina2.it}

\address[Monica Musso] {Departamento de Matematicas,
Pontificia Universidad Cat\'olica de Chile
Casilla 306, Correo 22 Santiago, Chile. }
\email{mmusso@mat.puc.cl}

\address[Angela Pistoia] {Dipartimento SBAI, Universit\`{a} di Roma ``La Sapienza", via Antonio Scarpa 16, 00161 Roma, Italy}
\email{pistoia@dmmm.uniroma1.it}

\begin{abstract}
We consider  the critical heat equation
\begin{equation} \label{CH}\tag{CH}
\begin{array}{lr}
v_t-\Delta v =|v|^{\frac{4}{n-2}}v & \Omega_{\epsilon}\times (0, +\infty)
\\
v=0 & \partial\Omega_{\epsilon}\times (0, +\infty)
\\
v=v_0 &  \mbox{ in } \Omega_{\epsilon}\times \{t=0\}
\end{array}
\end{equation}
in  $\Omega_{\epsilon}:=\Omega\setminus B_{\epsilon}(x_0)$ where $\Omega$ is a smooth bounded domain in $\mathbb R^N$, $N\geq 3$ and $B_{\epsilon}(x_0)$ is a ball of $\mathbb R^N$ of center $x_0\in\Omega$ and radius $\epsilon >0$ small.
\\
We show that if  $\epsilon>0$ is small enough, then there exists a sign-changing stationary solution $\phi_{\epsilon}$ of \eqref{CH} such that the solution of \eqref{CH} with initial value $v_0=\lambda \phi_{\epsilon}$ blows up in finite time if $|\lambda -1|>0$ is sufficiently small.\\
This shows in particular that the set of the initial conditions for which the solution of \eqref{CH} is global and bounded is not star-shaped.
\end{abstract}

\subjclass[2000]{ 35K91, 35B35, 35B44, 35J91}

\date{\today}
\keywords{ Semi-linear parabolic equations, sign-changing stationary solutions, critical Sobolev exponent, blow-up phenomena}

\maketitle
  \footnotetext{The second author has been supported by Fondecyt Grant 1120151. The first and the third authors have been partially supported by the Gruppo Nazionale per l'Analisi Matematica, la Probabilit\'a e le loro Applicazioni (GNAMPA) of the Istituto Nazionale di Alta Matematica (INdAM) and  PRIN 2012 grant(Italy).}

\section{Introduction}

We consider the semilinear parabolic equation
\begin{equation}\label{problemGenDomain}\left\{\begin{array}{ll}v_t-\Delta v= |v|^{p-1}v & \mbox{ in }D\times (0,T)\\
v=0 & \mbox{ on }\partial D\times (0,T)\\
v(0)=v_0 & \mbox{ in }D
\end{array}\right.
\end{equation}
where $D$ is a smooth bounded domain in $\mathbb R^N$, $N\in\mathbb N$, $N\geq 3$ and  $p>1$.
\\
\\
For any $p>1$ problem \eqref{problemGenDomain} is locally well-posed for $v_0\in C_0(D)$, where
\[C_0(D)=\{v\in C(\bar D),\ v=0  \mbox{ on } \partial D\}.\]
Let $T_{max}(v_0)\in (0, +\infty]$ denote the maximal existence time of the unique local in time classical solution $v=v(\cdot, t)$ of \eqref{problemGenDomain}.\\
\\
The solution $v$ is said to be {\it global} when $T_{max}(v_0)=\infty$, while when $T_{max}(v_0)<\infty$ it is said to {\it blow-up in finite time}.
\\
Let us define the set of initial values such that the solution is global
\[\mathcal{G}=\{v_0\in C_0(D),\ T_{max}(v_0)=+\infty\}\]
and its complementary set of initial conditions for which the corresponding solution blows-up in finite time
\[\mathcal{F}=\{v_0\in C_0(D),\ T_{max}(v_0)<+\infty\}.\]
Let also $\mathcal{B}\subseteq\mathcal{G}$ be the set of initial values for which the solution  is global in time and has an $L^{\infty}$-global bound
\[\mathcal{B}=\{v_0\in C_0(D),\ T_{max}(v_0)=+\infty\mbox{ and } \sup_{t\geq 0}\|v(t)\|_{L^{\infty}}<\infty\}.\]
When $p$ is subcritical,  namely $1<p<p_S$ where
\begin{equation}
p_S=\frac{N+2}{N-2}
\end{equation}
($2^*=p_S+1$ is the critical Sobolev exponent), one has that $\mathcal{B}=\mathcal{G}$ since
\[
T_{max}(v_0)=+\infty\ \Rightarrow \sup_{t\geq 0}\|v(t)\|_{L^\infty}< \infty,
\]
but for $p\geq p_S$ it may occur that $\mathcal{B}\subset\mathcal{G}$.  In the critical case
$p=p_S$ for instance, it is well known that {\it infinite time blow-up} may occur, namely there may exist $v_0\in C_0(D)$ such that
\[
T_{max}(v_0)=+\infty\ \mbox{ and }\ \lim_{t\uparrow +\infty}\|v(t)\|_{L^\infty}=+\infty.
\]
(cfr. \cite{NST, GV, GK} for a radial positive $v_0\in\mathcal{G}\setminus\mathcal{B}$ when $D$ is a ball, and \cite{Suzuki} for a positive $v_0\in\mathcal{G}\setminus\mathcal{B}$ when $D$ is convex and symmetric, see also \cite{Ishiwata} for necessary and sufficient conditions for the  $L^{\infty}$ global bound in the critical and subcritical case).

\

Let us observe that all the stationary solutions of \eqref{problemGenDomain} belong to $\mathcal{B}$ (if any: when $p\geq p_S$ and $D$ is star-shaped for instance the only stationary solution is the trivial one, cfr. \cite{Pohozaev}), moreover $\mathcal{B}$  contains a neighborhood of the origin (since the zero solution is exponentially asymptotically stable in $L^{\infty}$, see \cite[Theorem 19.2]{quittnersouplet}).
\\

If we restrict ourselves to non-negative initial data, then the solutions are positive by the parabolic maximum principle, hence we may replace the nonlinearity $|v|^{p-1}v$ by $|v|^p$ which is convex and so (see \cite{Lions}) the corresponding sets
$\mathcal{G}^+=\{v_0\in\mathcal G,\ v_0\geq 0\}$ and  $\mathcal{B}^+= \{v_0\in\mathcal B,\ v_0\geq 0\}$ are convex, hence star-shaped  around $0$.

More specifically if $\phi$ is a stationary positive solution to \eqref{problemGenDomain}  and $v_0=\lambda\phi$ for $\lambda >0$, then
$v_0>\phi$ if $\lambda >1$  and so $v_0\in \mathcal{F}$ (see for instance \cite[Theorem 17.8]{quittnersouplet}), while $0\leq v_0\leq\phi$ for $0<\lambda \leq 1$ and so $v_0\in \mathcal B$ (since by the parabolic maximum principle $0<v(t)\leq\phi$ for $t>0$ and moreover \cite[Lemma 17.9]{quittnersouplet} applies).

\

If we consider sign-changing initial data then the arguments above can not be applied. In particular if $\phi$ is a sign-changing stationary solution to \eqref{problemGenDomain}, then it is not comparable with $\lambda\phi$ for $\lambda\neq 1$. Anyway, since the zero solution is exponentially asymptotically stable in $L^{\infty}$, then clearly $\lambda\phi\in \mathcal{B}$  for $\lambda$ sufficiently small, moreover it is also known that $\lambda\phi\in \mathcal{F}$ for $\lambda$ sufficiently large (see \cite[Theorem 17.6]{quittnersouplet}) and of course if $\lambda=1$ then $\lambda\phi\in \mathcal B$.

\

Recently it has been proved (cfr. \cite{CazenaveDicksteinWeissler}  when $D$ is a ball, \cite{mps} for any smooth bounded domain $D$), that if $p$ is subcritical and sufficiently close to the critical exponent, then there exist  sign-changing stationary solutions $\phi$ such that $\lambda\phi\in\mathcal{F}$ for $\lambda >0$ sufficiently close to $1$. This results shows that in the subcritical case $\mathcal{B}$ in general is not star-shaped around 0, hence not convex.

\

The aim of this paper is to extend this result to the critical case $p=p_S$.

\

For $p=p_S$  already the existence of  a (sign-changing) stationary solution $\phi$ of  \eqref{problemGenDomain} is an issue.
Indeed it is well known that there are no nontrivial stationary solutions when $D$ is strictly starshaped (cfr. \cite{Pohozaev}), while it is easy to prove the existence of infinitely many radial stationary solutions if $D$ is an annulus (cfr. \cite{KazdanWarner}). It is also known that there is a positive stationary solution whenever the homology of dimension $d$ of $D$ with $\mathbb Z_2$ coefficients is nontrivial for some positive integer $d$ (cfr. \cite{BahriCoron}).

\

We consider here problem \eqref{problemGenDomain} when $p=p_S$ and  $D$ is a  domain with a small hole, precisely
\begin{equation}
D:=\Omega_{\epsilon}=\Omega\setminus B_{\epsilon}(x_0),
\end{equation}
where $\Omega$ is a smooth bounded domain in $\mathbb R^N$, $N\in\mathbb N$, $N\geq 3$ and $B_{\epsilon}(x_0)\subset\Omega$ is a ball of $\mathbb R^N$ of center $x_0\in\Omega$ and radius $\epsilon >0$ small enough.
\\

Under these assumptions the existence of a positive stationary solution is a classical result by Coron (cfr. \cite{Coron}), while the existence of an arbitrary large number of sign-changing stationary solutions has been obtained in  more recent works by Musso and Pistoia (cfr. \cite{MussoPistoia})
 and Ge, Musso and Pistoia (cfr. \cite{gmp}), in the case when the hole is small enough.

\

Our main result is the following
\begin{theorem}
\label{teorema:casoGenerale}
Let $\Omega_{\epsilon}:=\Omega\setminus B_{\epsilon}(x_0)$ where $\Omega$ is a smooth bounded domain in $\mathbb R^N$, $N\in\mathbb N$, $N\geq 3$, $x_0\in\Omega$ and $\epsilon >0$ small.\\
There exists $\epsilon_0 >0$ with the following property. If $0<\epsilon\leq \epsilon_0$ then there exists a sign-changing stationary solution $\phi_{\epsilon}$ of
\begin{equation}
\label{problem}
\left\{\begin{array}{ll}v_t-\Delta v= |v|^{\frac{4}{N-2}}v & \mbox{ in }\Omega_{\epsilon}\times (0,T)\\
v=0 & \mbox{ on }\partial \Omega_{\epsilon}\times (0,T)\\
v(0)=v_0 & \mbox{ in }\Omega_{\epsilon}
\end{array}\right.
\end{equation}
 and a constant $\delta_{\epsilon}>0$ such that if $\lambda>0$, $0<|\lambda -1|<\delta_{\epsilon}$ then the classical solution $v$ of \eqref{problem} with initial value $v_0=\lambda \phi_{\epsilon}$ blows up, namely $v_0\in \mathcal F \cup (\mathcal G\setminus\mathcal B)$.\\
 As a consequence  $\mathcal B$ is not star-shaped around zero.
\end{theorem}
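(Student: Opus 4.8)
\medskip
\noindent\emph{Idea of the proof.}
The plan is to play the variational structure of \eqref{problem} (which is the $L^2$--gradient flow of $J_\epsilon(v)=\frac12\int_{\Omega_\epsilon}|\nabla v|^2-\frac{N-2}{2N}\int_{\Omega_\epsilon}|v|^{\frac{2N}{N-2}}$) against the instability of sign--changing equilibria. First I would fix, for $\epsilon$ small, a sign--changing stationary solution $\phi_\epsilon$ produced by the Lyapunov--Schmidt construction of \cite{MussoPistoia} (or \cite{gmp}), extracting from it the features that matter: $\phi_\epsilon$ is \emph{non--degenerate}, so $L_\epsilon:=-\Delta-\frac{N+2}{N-2}|\phi_\epsilon|^{\frac{4}{N-2}}$ on $H_0^1(\Omega_\epsilon)$ is invertible and has a finite Morse index $m_\epsilon$, which is moreover $\ge 2$ since, testing the quadratic form of $L_\epsilon$ against $\phi_\epsilon^+$ and $\phi_\epsilon^-$ (which have disjoint supports and satisfy $\|\nabla\phi_\epsilon^\pm\|_2^2=\|\phi_\epsilon^\pm\|_{2^*}^{2^*}$) one gets a two--dimensional subspace of negativity; and $\phi_\epsilon$ may be chosen with energy $J_\epsilon(\phi_\epsilon)$ arbitrarily close to $\frac{k}{N}S^{N/2}$ for the relevant small integer $k\ge 2$ ($S$ the best Sobolev constant), hence below the energy level that separates, for \eqref{problem}, relaxation from blow--up. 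Since $\phi_\epsilon$ solves the stationary equation, $\|\nabla\phi_\epsilon\|_2^2=\|\phi_\epsilon\|_{2^*}^{2^*}=:A_\epsilon$, so $j(\lambda):=J_\epsilon(\lambda\phi_\epsilon)=A_\epsilon\bigl(\frac{\lambda^2}{2}-\frac{\lambda^{2^*}}{2^*}\bigr)$ has $j'(1)=0$, $j''(1)=(2-2^*)A_\epsilon<0$; thus $\lambda=1$ is the unique and strict maximum of $j$ on $(0,+\infty)$, and $J_\epsilon(\lambda\phi_\epsilon)<J_\epsilon(\phi_\epsilon)$ for all $\lambda>0$, $\lambda\ne1$. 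As $t\mapsto J_\epsilon(v(t))$ is non--increasing along \eqref{problem}, the solution $v$ with $v_0=\lambda\phi_\epsilon$ satisfies $J_\epsilon(v(t))\le J_\epsilon(\lambda\phi_\epsilon)<J_\epsilon(\phi_\epsilon)$ throughout its life span.

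Next I would argue by contradiction: suppose $\lambda\phi_\epsilon\in\mathcal B$ for some $\lambda$ with $0<|\lambda-1|$ as small as we please. Then $v$ is global and $L^\infty$--bounded, hence by parabolic regularity bounded in $C^{2,\alpha}(\overline{\Omega_\epsilon})$, and, the flow being gradient--like, $v(t)$ converges in $C^2$ as $t\to+\infty$ to a stationary solution $\psi_\lambda$ with $J_\epsilon(\psi_\lambda)\le J_\epsilon(\lambda\phi_\epsilon)<J_\epsilon(\phi_\epsilon)$ (see \cite[Ch.~17,~19]{quittnersouplet}; the a priori $L^\infty$ bound forbids bubbling in the limit). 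So $\psi_\lambda\ne\pm\phi_\epsilon$, and by the choice of $\phi_\epsilon$ the limit $\psi_\lambda$ must be $0$ or a stationary solution of constant sign. The whole matter thus reduces to \emph{excluding}, for $\lambda$ close to $1$, $\lambda\ne1$, that $v$ converges to $0$ or to a constant--sign stationary solution.

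To begin excluding this I would use that $\phi_\epsilon$ is a hyperbolic equilibrium of the smooth semiflow of \eqref{problem} on $C_0(\Omega_\epsilon)$: it has local stable and unstable manifolds $W^s_{\mathrm{loc}},W^u_{\mathrm{loc}}$ at $\phi_\epsilon$ with $\operatorname{codim}W^s_{\mathrm{loc}}=m_\epsilon\ge2$, and a neighbourhood $\mathcal V$ of $\phi_\epsilon$ such that every trajectory issuing from $\mathcal V\setminus W^s_{\mathrm{loc}}$ leaves $\mathcal V$ in finite time. The ray $\lambda\mapsto\lambda\phi_\epsilon$ has tangent vector $\phi_\epsilon$ at $\lambda=1$, and $\langle L_\epsilon\phi_\epsilon,\phi_\epsilon\rangle_{L^2}=\|\nabla\phi_\epsilon\|_2^2-\frac{N+2}{N-2}\|\phi_\epsilon\|_{2^*}^{2^*}=-\frac{4}{N-2}A_\epsilon<0$, so $\phi_\epsilon$ has a nonzero component in the unstable subspace of $L_\epsilon$; hence the ray is transverse to $W^s_{\mathrm{loc}}$ at $\phi_\epsilon$, and $\lambda\phi_\epsilon\notin W^s_{\mathrm{loc}}$ for $0<|\lambda-1|$ small. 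Therefore $v$ leaves $\mathcal V$, tracking the unstable manifold, and it remains to show that afterwards it cannot relax to $0$ or to a constant--sign stationary solution. For $\lambda>1$ this is the more standard side: here $I_\epsilon(\lambda\phi_\epsilon)<0$ (with $I_\epsilon(w)=\|\nabla w\|_2^2-\|w\|_{2^*}^{2^*}$), and also $I_\epsilon$ of each of the two (nearly bubble--shaped, nearly disjointly supported) parts of $\lambda\phi_\epsilon$ is negative; combining the energy trapping $J_\epsilon(v(t))<J_\epsilon(\phi_\epsilon)$ with the potential--well/concavity machinery in the form available at the critical exponent — applied to the part of the solution carrying the larger bump — one gets that $v$ enters and stays in a set from which it must leave $\mathcal B$ (finite--time blow--up once $v(t)$ reaches $\{J_\epsilon<\frac1N S^{N/2},\,I_\epsilon<0\}$, infinite--time blow--up otherwise). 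For $\lambda<1$ one has instead $I_\epsilon(\lambda\phi_\epsilon)>0$, so $v$ exits $\mathcal V$ on the opposite side of $W^u_{\mathrm{loc}}$; here one must prevent decay to $0$, using the precise profile of $\phi_\epsilon$ (its bumps sit at prescribed, geometry--dependent points and scales, which keeps the solution out of the basin of $0$ long enough to trap its energy strictly above $0$) together with the same critical--exponent dichotomy.

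Putting this together gives, for all $\lambda$ with $0<|\lambda-1|<\delta_\epsilon$, that $v_0=\lambda\phi_\epsilon\notin\mathcal B$, i.e. $v_0\in\mathcal F\cup(\mathcal G\setminus\mathcal B)$; and since $0,\phi_\epsilon\in\mathcal B$ while the segment between them contains points $\lambda\phi_\epsilon$ ($\lambda<1$ close to $1$) not in $\mathcal B$, the set $\mathcal B$ is not star--shaped around $0$. The genuinely hard step — and the only place where the geometry ``domain with a small hole'' is really exploited — is the last one: ruling out that the perturbed trajectory relaxes to the trivial solution or to a constant--sign solution, especially on the side $\lambda<1$, where the elementary energy/Nehari obstructions are unavailable. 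This is where one needs both the quantitative control on $\phi_\epsilon$ and its unstable manifold coming from \cite{MussoPistoia,gmp} and a sharp classification of the low--energy dynamics of \eqref{problem} at the critical Sobolev exponent.
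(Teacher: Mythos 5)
There is a genuine gap: your argument is a program rather than a proof, and the step you yourself single out as ``the genuinely hard step'' is exactly the one that is missing and that, as set up, would fail. After the trajectory leaves the neighbourhood of $\phi_\epsilon$ you must exclude relaxation to \emph{any} other bounded equilibrium, but your reduction ``the limit must be $0$ or of constant sign'' is unjustified: energy monotonicity only rules out $\pm\phi_\epsilon$, while the pierced domain carries many further sign--changing stationary solutions (other towers from \cite{gmp,MussoPistoia}, combinations of Coron--type bubbles, \dots), possibly with energy below $J_\epsilon(\lambda\phi_\epsilon)$. The potential--well machinery you invoke is also inapplicable here: $J_\epsilon(\phi_\epsilon)\approx\frac{k}{N}S^{N/2}$ with $k\ge2$, so $\lambda\phi_\epsilon$ does not lie below the level $\frac1N S^{N/2}$, and applying the dichotomy ``to the part of the solution carrying the larger bump'' is not legitimate, since that part does not solve the equation and the supports of the bumps interact under the flow. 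In addition, the whole stable/unstable--manifold setup rests on $\phi_\epsilon$ being non--degenerate (hyperbolic), which the constructions in \cite{gmp,MussoPistoia} do not provide and which you do not prove; without it neither the transversality of the ray $\lambda\mapsto\lambda\phi_\epsilon$ to $W^s_{\mathrm{loc}}$ nor the exit from the neighbourhood is available. Finally, convergence of a bounded global orbit to a \emph{single} equilibrium (rather than to the set of equilibria) also needs an argument you do not supply.

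The paper avoids all of these issues by a completely different, and much softer, route: no non--degeneracy, no invariant manifolds, and no classification of the low--energy dynamics are needed. One takes the $k$--tower solution of Lemma \ref{existenceOfTowers}, and the only quantitative input is the asymptotic spectral analysis of $L_\epsilon=-\Delta-p_S|\phi_\epsilon|^{p_S-1}$ obtained by rescaling at the fastest bubble (Lemmas \ref{bub1}--\ref{lem2}), which yields the sign condition $\int_{\Omega_\epsilon}\phi_\epsilon\varphi_{1,\epsilon}\,dx>0$ of Proposition \ref{towerVerificaCondizioneIntegraleCasoGEN}. This condition feeds into the linearization result of Cazenave--Dickstein--Weissler (Proposition \ref{proposition:generaleCazenaveDickWeiss}), giving a time $t_0$ with $v^{\lambda}(t_0)>\phi_\epsilon$ for $\lambda\in(1,1+\delta]$ and $v^{\lambda}(t_0)<\phi_\epsilon$ for $\lambda\in[1-\delta,1)$, and then the comparison blow--up criterion for sign--changing stationary solutions at the critical exponent (Proposition \ref{proposition:comparison}) directly yields $v_0\in\mathcal F\cup(\mathcal G\setminus\mathcal B)$; note that this criterion, proved by a monotone sub/supersolution argument, is precisely what replaces your attempted dynamical classification. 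If you wish to salvage your approach you would have to prove non--degeneracy of $\phi_\epsilon$ and a genuinely new rigidity statement for the critical flow at energies of order $\frac{k}{N}S^{N/2}$; as written, the proposal does not establish the theorem.
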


Observe that now,  unlike the subcritical case, the solution $v$ may blow-up in finite or infinite time and so we can not conclude that also the set $\mathcal{G}$ is not convex, unless we restrict to more specific situations like in the following result related to sign-changing radial solutions in an annulus

\begin{theorem}
\label{teorema:casoRadiale}
Let $\Omega_{\epsilon}:=\{x\in \mathbb R^N:\ 0<\epsilon <|x|<1\}$, $N\in\mathbb N$, $N\geq 3$,  $\epsilon>0$ small.

There exists $\epsilon_0 >0$ with the following property. If $0<\epsilon\leq \epsilon_0$ then there exists a sign-changing radial stationary solution $\widehat{\phi_{\epsilon}}$ of  \eqref{problem} and a constant $\delta_{\epsilon}>0$ such that if $\lambda>0$, $0<|\lambda -1|<\delta_{\epsilon}$ then the classical solution $v$ of \eqref{problem} with initial value $v_0=\lambda \widehat{\phi_{\epsilon}}$ blows-up in finite time, namely $v_0\in \mathcal F$.
\\
As a consequence both $\mathcal B$ and $\mathcal G$ are not star-shaped around zero.
\end{theorem}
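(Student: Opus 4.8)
The plan is to \emph{reduce} Theorem~\ref{teorema:casoRadiale} to Theorem~\ref{teorema:casoGenerale} by carrying out the whole construction and argument inside the class of radially symmetric functions, and then to \emph{upgrade} the conclusion ``$v_0\notin\mathcal B$'' to ``$v_0\in\mathcal F$'' by observing that, on an annulus, the critical heat equation is no longer critical once restricted to radial data.

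First I would produce a radial sign-changing stationary solution $\widehat{\phi_\epsilon}$ of \eqref{problem} carrying the structural features that the proof of Theorem~\ref{teorema:casoGenerale} exploits: nondegeneracy (in the radial class) of the linearized operator $-\Delta-\tfrac{N+2}{N-2}|\widehat{\phi_\epsilon}|^{\frac{4}{N-2}}$, together with the fact that $\widehat{\phi_\epsilon}$ has a nonzero component along its unstable subspace, which is automatic since
\[
\left\langle\Bigl(-\Delta-\tfrac{N+2}{N-2}|\widehat{\phi_\epsilon}|^{\frac{4}{N-2}}\Bigr)\widehat{\phi_\epsilon},\,\widehat{\phi_\epsilon}\right\rangle=-\tfrac{4}{N-2}\int_{\Omega_\epsilon}|\widehat{\phi_\epsilon}|^{\frac{2N}{N-2}}<0 .
\]
For $\epsilon$ small such a $\widehat{\phi_\epsilon}$, changing sign exactly once, is available either by specializing the constructions of \cite{MussoPistoia,gmp} to radial functions, or directly by an ODE shooting argument à la \cite{KazdanWarner}, the radial nondegeneracy being then read off the associated Wronskian and a continuity-in-$\epsilon$ analysis. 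Since the parabolic flow of \eqref{problem} preserves radial symmetry and all the abstract facts quoted from \cite{quittnersouplet} have obvious radial-class analogues, the proof of Theorem~\ref{teorema:casoGenerale} applies with no change to $\widehat{\phi_\epsilon}$ and yields $\delta_\epsilon>0$ such that, for $\lambda>0$ with $0<|\lambda-1|<\delta_\epsilon$, the solution of \eqref{problem} with $v_0=\lambda\widehat{\phi_\epsilon}$ is not global-and-bounded, i.e. $v_0\in\mathcal F\cup(\mathcal G\setminus\mathcal B)$.

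Next I would rule out the infinite-time blow-up alternative $v_0\in\mathcal G\setminus\mathcal B$. Here the Emden--Fowler-type change of variables $u(x,t)=|x|^{-\frac{N-2}{2}}\,w(\ln|x|,t)$ puts radial solutions of \eqref{problem} on $\{\epsilon<|x|<1\}$ in one-to-one correspondence with solutions of the one-dimensional problem
\begin{equation*}
w_t=e^{-2s}\left(w_{ss}-\left(\tfrac{N-2}{2}\right)^{2}w+|w|^{\frac{4}{N-2}}w\right),\qquad s\in(\ln\epsilon,0),
\end{equation*}
with homogeneous Dirichlet conditions at the endpoints, the weight $e^{-2s}$ and the interval being fixed once $\epsilon$ is fixed. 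This is a uniformly parabolic equation in one space variable with a bounded (above and below) weight and a harmless lower-order linear term, so the power $\tfrac{N+2}{N-2}$ is Sobolev-\emph{sub}critical for it; consequently, exactly as recalled in the Introduction for the subcritical case (cf.\ \cite{quittnersouplet,Ishiwata}), every global solution is automatically $L^\infty$-bounded. Since $\|u(t)\|_{L^\infty(\Omega_\epsilon)}$ and $\|w(t)\|_{L^\infty}$ are comparable for fixed $\epsilon$, this means that $\mathcal G=\mathcal B$ within the radial class for \eqref{problem} on the annulus; hence $v_0=\lambda\widehat{\phi_\epsilon}\notin\mathcal B=\mathcal G$, that is $v_0\in\mathcal F$. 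Finally the last assertion is immediate: $0\in\mathcal B\subseteq\mathcal G$ and $\widehat{\phi_\epsilon}\in\mathcal B\subseteq\mathcal G$ (being a stationary solution), while the points $\lambda\widehat{\phi_\epsilon}$ with $0<|\lambda-1|<\delta_\epsilon$ lie in $\mathcal F$, hence in neither $\mathcal B$ nor $\mathcal G$; so the segment joining $0$ to $(1+\tfrac{\delta_\epsilon}{2})\widehat{\phi_\epsilon}$ is contained neither in $\mathcal B$ nor in $\mathcal G$.

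The genuinely delicate point I expect to be the first step: exhibiting a radial sign-changing stationary solution which is nondegenerate in the radial class for $\epsilon$ small, and checking that the instability/non-convergence mechanism behind Theorem~\ref{teorema:casoGenerale} survives verbatim the restriction to radial functions. The second step is comparatively soft, resting only on the classical fact that in the subcritical regime global solutions are bounded, a fact that is stable under the bounded weight and the lower-order perturbation produced by the Emden--Fowler substitution.
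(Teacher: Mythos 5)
Your reduction of Theorem \ref{teorema:casoRadiale} to the machinery of Theorem \ref{teorema:casoGenerale} breaks down at the very point that constitutes the core of the paper's argument. What Proposition \ref{proposition:generaleCazenaveDickWeiss} requires is not instability of $\widehat{\phi_\epsilon}$, nor nondegeneracy in the radial class (nondegeneracy is never used in the paper), but the precise condition \eqref{condizioneProdottoScalareDiversoDaZeroGen}: the $L^2$ projection of $\widehat{\phi_\epsilon}$ onto the \emph{first} (positive) eigenfunction $\varphi_{1,\epsilon}$ of $-\Delta-p_S|\widehat{\phi_\epsilon}|^{p_S-1}$ must be nonzero (positive, up to replacing $\phi$ by $-\phi$). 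It is this condition, through the positivity of $\varphi_{1,\epsilon}$, that produces the ordering $v^\lambda(t_0)>\widehat{\phi_\epsilon}$ or $v^\lambda(t_0)<\widehat{\phi_\epsilon}$ feeding the comparison argument of Proposition \ref{proposition:comparison}. Your claim that this is ``automatic'' because $\langle L_\epsilon\widehat{\phi_\epsilon},\widehat{\phi_\epsilon}\rangle=(1-p_S)\int_{\Omega_\epsilon}|\widehat{\phi_\epsilon}|^{p_S+1}<0$ only shows that $\widehat{\phi_\epsilon}$ has a nonzero component in the span of \emph{all} eigenfunctions with negative eigenvalue --- a subspace whose dimension for a $k$-tower grows with $k$ --- and gives no information on the component along $\varphi_{1,\epsilon}$, which could a priori vanish for a sign-changing solution. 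Verifying \eqref{condizioneProdottoScalareDiversoDaZeroGen} is exactly what Sections 3--4 of the paper do, via the identity $\int\phi_\epsilon\varphi_\epsilon=-\frac{p_S-1}{\lambda_\epsilon}\int f(\phi_\epsilon)\varphi_\epsilon$ together with an asymptotic spectral analysis: rescaling by the fastest concentration parameter, convergence $\tilde\lambda_\epsilon\to\lambda^*<0$ and convergence of the rescaled eigenfunction to $\varphi^*$, so that the rescaled right-hand side tends to $\int_{\mathbb R^N} f(U)\varphi^*>0$. None of this appears in your proposal, and it is also the reason the paper must use the tower solutions of \cite{MussoPistoia,gmp}, whose explicit asymptotics as $\epsilon\to0$ make this limit computable; an arbitrary radial sign-changing solution obtained by ODE shooting, even if nondegenerate, is of no use here without such asymptotic information.

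The remaining steps are essentially sound but call for two remarks. Your Emden--Fowler reduction to exclude infinite-time blow-up is a legitimate alternative in spirit to the paper's route, but as written it invokes the statement ``global solutions of subcritical problems are bounded'' for a weighted one-dimensional equation, $e^{2s}w_t=w_{ss}-\bigl(\tfrac{N-2}{2}\bigr)^2 w+|w|^{p_S-1}w$, which is not literally covered by the results recalled in the Introduction and would need its own justification; the paper avoids this by applying Ishiwata's criterion directly: the compact embedding $H^1_{0,r}(\Omega_\epsilon)\subset\subset L^{2^*}(\Omega_\epsilon)$ yields the Palais--Smale condition along the radial trajectory, hence the $L^\infty$ bound (last part of Proposition \ref{proposition:comparison}). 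Finally, to disprove star-shapedness you should take the segment from $0$ to $\widehat{\phi_\epsilon}$ itself (a point of $\mathcal B\subseteq\mathcal G$), which contains the points $\lambda\widehat{\phi_\epsilon}\in\mathcal F$ for $\lambda\in(1-\delta_\epsilon,1)$; a segment ending at $(1+\tfrac{\delta_\epsilon}{2})\widehat{\phi_\epsilon}\in\mathcal F$ proves nothing, since star-shapedness only constrains segments whose endpoint lies in the set.
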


\

The sign-changing stationary solution $\phi_{\epsilon}$ of Theorem \ref{teorema:casoGenerale} is any bubble tower solution found  in \cite{gmp}. The proof consists then in scaling it properly and performing an asymptotic spectral analysis of the linearized problem,  similarly as it has been done in the almost critical case (\cite{CazenaveDicksteinWeissler, mps}, see also \cite{DicksteinPacellaSciunzi, DeMarchisIanni} for the case $N=2$). Now the exponent of the nonlinearity is fixed, and the scaling parameter depends only on the radius $\epsilon$ of the hole. The asymptotic analysis is possible again thanks to the knowledge of the limit problem. Combining it with  general results for the heat flow (see Proposition \ref{proposition:generaleCazenaveDickWeiss} in Section \ref{Section:preliminaries}), we can show that $\phi_{\epsilon}$ can be compared with the solution $v$ at a certain time $t_0>0$.
The blow-up result then follows from a blow-up criterion via comparison for sign changing solutions of the critical heat equation (Proposition \ref{proposition:comparison}).
\\

The proof of Theorem \ref{teorema:casoRadiale} is obtained by repeating similar arguments but starting from any of the sign-changing  \emph{radial} bubble tower stationary solutions $\widehat{\phi_{\epsilon}}$  found in \cite{MussoPistoia}. With this choice the solution $v$ is radial and so if it is global then it must satisfy an $L^{\infty}$ global bound (cfr. \cite{Ishiwata}), thus excluding   infinite time blow-up.

\

\section{Preliminaries}\label{Section:preliminaries}

In this section we provide a blow-up criterion via comparison  for sign-changing solutions of the critical heat equation. It extends to the critical case the analogous result already known  for the subcritical heat equation (see \cite[Proposition B.1]{CazenaveDicksteinWeissler} and \cite[Theorem 10]{GazzolaWeth}).
Unlike the subcritical case, both finite time blow-up and infinite time blow up can in general occur now.

\

\begin{proposition}\label{proposition:comparison}
Let $\psi\in C_{0}(D)$ be a sign-changing stationary solution of \eqref{problemGenDomain} with $p=p_S$. Let $v_0\in C_0(D)$, $v_0\not\equiv \psi$ be either $v_0\geq\psi$ or $v_0\leq\psi$, then $v_0\in\mathcal{F}\cup (\mathcal G\setminus\mathcal B).$\\
If in particular $D$ is an annulus $\{x\in\mathbb R^N:\ a<|x|<b\}$ (for $b>a>0$) and $v_0$ is radially symmetric, then $v_0\in\mathcal F$.
\end{proposition}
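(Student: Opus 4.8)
The plan is to exploit the variational and order structure of the critical heat flow exactly as in the subcritical argument of Cazenave--Dickstein--Weissler and Gazzola--Weth, the only new subtlety being that a globally bounded solution need not converge to a stationary solution in the critical case, so one must track the energy rather than the $\omega$-limit set. Assume first that $v_0\geq\psi$, $v_0\not\equiv\psi$ (the case $v_0\leq\psi$ is symmetric, replacing $v$ by $-v$ and using that $-\psi$ is again a sign-changing stationary solution). Write $v(t)$ for the classical solution of \eqref{problemGenDomain} with $p=p_S$ and initial datum $v_0$, defined on $[0,T_{max})$. Suppose, for contradiction, that $v_0\in\mathcal B$, i.e. $T_{max}=+\infty$ and $M:=\sup_{t\geq 0}\|v(t)\|_{L^\infty}<\infty$.

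First I would establish strict ordering: by the parabolic comparison principle applied to $v$ and the stationary solution $\psi$, one has $v(t)\geq\psi$ for all $t\in[0,+\infty)$, and by the strong maximum principle applied to the difference $w=v-\psi$ (which solves a linear parabolic equation with bounded coefficients, since $|v|^{p-1}v-|\psi|^{p-1}\psi = c(x,t)\,w$ with $c$ locally bounded), the inequality is strict, $v(t)>\psi$ in $D$ for every $t>0$; moreover for $t\geq 1$ one gets a quantitative gap $v(t)\geq\psi+\eta\,\varphi_1$ for some $\eta>0$, where $\varphi_1>0$ is the first Dirichlet eigenfunction, using the Hopf lemma on $w(1)$. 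Next, since $v$ is globally bounded in $L^\infty$, parabolic regularity gives uniform bounds in $C^{2,\alpha}(\bar D)$ for $t\geq 1$, hence the trajectory $\{v(t):t\geq 1\}$ is relatively compact in $C_0(D)$ and the energy $E(v(t))=\tfrac12\int_D|\nabla v(t)|^2 - \tfrac{1}{p+1}\int_D|v(t)|^{p+1}$ is nonincreasing, bounded below along the trajectory, with $\tfrac{d}{dt}E(v(t))=-\|v_t(t)\|_{L^2}^2$, so $E(v(t))\to E_\infty$ and $v_t(t)\to 0$ in $L^2$; any element $z$ of the $\omega$-limit set is then a (bounded, nonnegative... no: merely $z\geq\psi$) stationary solution with $E(z)=E_\infty$ and, by continuity of the ordering, $z\geq\psi$ with $z\not\equiv\psi$, hence by the strong maximum principle $z>\psi$, so in particular $z\neq\psi$. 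The contradiction I would drive: test the stationary equation for $z$ and for $\psi$ against $z-\psi$ and subtract; strict monotonicity of $s\mapsto|s|^{p-1}s$ gives $\int_D|\nabla(z-\psi)|^2 = \int_D(|z|^{p-1}z-|\psi|^{p-1}\psi)(z-\psi) > 0$ whenever $z\not\equiv\psi$, so no such comparison directly yields a contradiction --- instead I would use the instability of $\psi$: being sign-changing, $\psi$ has Morse index $\geq 2$ (two nodal domains each furnish a direction of negative second variation), so there is $h\in H_0^1(D)$ with $E''(\psi)[h,h]<0$; but the precise contradiction comes from the argument of \cite[Proposition B.1]{CazenaveDicksteinWeissler}: since $v(t)\geq\psi+\eta\varphi_1$ for $t\geq 1$ and $\psi$ is an unstable equilibrium lying "below" $v$, the flow cannot stay bounded --- one shows $\frac{d}{dt}\int_D(v(t)-\psi)\varphi_1\,dx \geq \int_D\big(|v|^{p-1}v-|\psi|^{p-1}\psi-\lambda_1(v-\psi)\big)\varphi_1\,dx$, and convexity of $s\mapsto|s|^{p-1}s$ on $[\min\psi,\infty)$ away from where $\psi<0$... which is exactly where sign-changing $\psi$ forces care. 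I would instead follow the cleaner route: set $J(t)=E(v(t))$; if $v_0\in\mathcal B$ then $v(t)\to\mathcal E$, the set of stationary solutions $z\geq\psi$, $z\not\equiv\psi$; pick any such $z$; then $E(z)<E(\psi)$ is \emph{false} in general, so the genuine engine is: the linearization at $\psi$ has a negative eigenvalue with eigenfunction $\Phi$ that is \emph{not} sign-definite, but one can still run the eigenfunction-projection ODE comparison of \cite{GazzolaWeth} because $v-\psi$ has a sign. Concretely, let $\mu_1<0$ be the first eigenvalue of $-\Delta - p|\psi|^{p-1}$ on $H_0^1(D)$ with eigenfunction $\Phi_1$, which \emph{is} of one sign (first eigenfunction!); project $g(t):=\int_D(v(t)-\psi)\Phi_1\,dx \geq 0$; using $|v|^{p-1}v-|\psi|^{p-1}\psi \geq p|\psi|^{p-1}(v-\psi)$ is false for sign-changing $\psi$, but $|v|^{p-1}v-|\psi|^{p-1}\psi - p|\psi|^{p-1}(v-\psi)$ has the sign of $v-\psi\geq 0$ on the set $\{\psi\geq 0\}$ and is controlled on $\{\psi<0\}$ by a bounded perturbation since $v$ is bounded --- and here the boundedness hypothesis $v_0\in\mathcal B$ is used to absorb the bad set. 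This yields $g'(t)\geq |\mu_1| g(t) - C$, and since $g(1)>0$ with $g$ bounded (as $v$ bounded) we obtain either unboundedness (contradicting $\mathcal B$) or $g\equiv$const forcing $v\equiv$ a stationary solution $=\psi$, contradicting $v_0\not\equiv\psi$.

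The main obstacle, and the step I would expect to cost the most work, is precisely making the eigenfunction-projection inequality $g'(t)\geq|\mu_1|g(t)-C$ rigorous when $\psi$ changes sign: one must split $D$ into $\{\psi\geq 0\}$ and $\{\psi<0\}$, on the first set use convexity/monotonicity of $s\mapsto s^p$ for $s\geq 0$ together with $v\geq\psi\geq 0$ there, and on the second set use the global $L^\infty$ bound $\|v(t)\|_\infty\leq M$ (this is the only place the hypothesis $v_0\in\mathcal B$, as opposed to merely global, enters) to bound $\big||v|^{p-1}v-|\psi|^{p-1}\psi-p|\psi|^{p-1}(v-\psi)\big|$ by a constant depending on $M$ and $\|\psi\|_\infty$, times $\Phi_1$. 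Once this is in hand, the ODE differential inequality for $g$ immediately contradicts $\sup_t\|v(t)\|_\infty<\infty$. Hence $v_0\notin\mathcal B$, i.e. $v_0\in\mathcal F\cup(\mathcal G\setminus\mathcal B)$, which is the first assertion.

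For the radial refinement: if $D$ is an annulus and $v_0$ is radial, then $v(t)$ stays radial for all time, and by \cite{Ishiwata} a global radial solution of the critical equation automatically satisfies a uniform $L^\infty$ bound, i.e. for radial data $\mathcal G=\mathcal B$; combining with the first part, which gave $v_0\notin\mathcal B$, forces $v_0\notin\mathcal G$, i.e. $v_0\in\mathcal F$. $\hfill\qed$
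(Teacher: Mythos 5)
Your opening steps (strict parabolic comparison giving $v(t)>\psi$ for $t>0$, the gap $v(1)\geq\psi+\epsilon\varphi_1$ via the first eigenfunction, the odd-symmetry reduction of the case $v_0\leq\psi$, and the reduction of the annulus statement to \cite{Ishiwata}) all match the paper. The problem is the engine you finally settle on. Having discarded the $\omega$-limit/Morse-index routes, you rest the contradiction on the projection $g(t)=\int_D\bigl(v(t)-\psi\bigr)\Phi_1\,dx$ and the inequality $g'(t)\geq|\mu_1|\,g(t)-C$, where the constant $C$ comes from the set $\{\psi<0\}$, on which you only bound the error $f(v)-f(\psi)-f'(\psi)(v-\psi)$ (with $f(s)=|s|^{p_S-1}s$) by an $O(1)$ quantity depending on $M=\sup_t\|v(t)\|_{L^\infty}$ and $\|\psi\|_{L^\infty}$. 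This differential inequality does \emph{not} force $g$ to become unbounded: it is perfectly compatible with $g(t)\leq C/|\mu_1|$ for all $t$, and you never show that $g$ exceeds this threshold at any time --- the gap at $t=1$ only gives $g(1)\geq\eta\int_D\varphi_1\Phi_1>0$ with $\eta$ small, while $C$ is large and grows with $M$, which is not under your control. The claimed dichotomy ``either $g$ is unbounded or $g\equiv\mathrm{const}$, forcing $v$ to be stationary'' is not a consequence of the inequality (a bounded, non-constant $g$ satisfies it), so no contradiction is reached and the first assertion remains unproved. To salvage a projection argument you would need the nonlinear error to be small relative to $|\mu_1|g$ uniformly in time, and that is precisely what fails globally when $\psi$ changes sign.

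The paper closes this gap by a different, order-theoretic mechanism taken from \cite[Proposition B.1]{CazenaveDicksteinWeissler}: for $\epsilon$ small, $\psi+\epsilon\varphi_1$ is a stationary \emph{subsolution} of \eqref{problemGenDomain} and, for any stationary solution $\psi'$, $\psi'-\epsilon'\varphi_1'$ is a stationary \emph{supersolution} (Lemma B.4 there, which exploits the negativity of the first eigenvalue to absorb the nonlinear remainder for small $\epsilon$ --- locally in $\epsilon$, not globally in time). One runs the flow $w$ starting from $\psi+\epsilon\varphi_1$: comparison squeezes it between $\psi$ and $v(\cdot,\,\cdot+1)$, so under your contradiction hypothesis $w$ is global and uniformly bounded; since it starts from a subsolution it is nondecreasing in $t$, hence converges to a stationary solution $\psi'$ with $\psi'\geq\psi+\epsilon\varphi_1$; finally the supersolution $\psi'-\epsilon'\varphi_1'$ traps $w(t)$ below $\psi'-\epsilon'\varphi_1'$ for $t\geq1$, contradicting $w(t)\uparrow\psi'$. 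I recommend replacing your differential-inequality step by this two-sided sub/supersolution argument; your treatment of the radial case can stay as it is, provided you justify the applicability of \cite{Ishiwata} as the paper does, via the compact embedding of radial $H^1_0$ functions, which yields the Palais--Smale condition along the radial trajectory.
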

\begin{proof}
Once the first part is proved, the last assertion follows directly from \cite{Ishiwata}, where it has been showed that a necessary and sufficient condition to get an $L^{\infty}$ global bound for a global in time solution $v$ of \eqref{problemGenDomain} with $p\in(1,p_S]$ is that the energy functional
\[J_p(u)=\frac{1}{2}\|\nabla u\|_{L^2(D)}^2-\frac{1}{p+1}\|u\|_{L^{p+1}(D)}^{p+1}, \quad u\in H^1_0(D)\]
satifies the Palais-Smale condition along $v$. Indeed when the domain $D$ in an annulus and $v_0$ is radial then the solution $v$  of \eqref{problemGenDomain} with initial condition $v(0)=v_0$ is radial and the Sobolev compact embedding $H^1_{0,r}(D)\subset\subset   L^{2^*}(D)$ (where $H^1_{0,r}(D)$ is the subspace of  the radial functions in $H^1_{0}(D)$) ensures that $J_{p_S}$ satisfies the Palais-Smale condition along $v$. \\

Next we prove the first part. The proof follows closely the one  for the subcritical case, the main difference being now the lack of a priori bounds for the $L^{\infty}$ norm of global solutions.\\

We repeat it in details for the reader convenience. We prove the case $v_0\geq\psi$, the other case being similar.\\

Let $v$ be the solution of \eqref{problemGenDomain} with initial condition
$v(0)=v_0$. Arguing by contradiction, assume that $v$ doesn't blow-up, namely that
\[T_{max}(v_0)=+\infty \ \ \mbox{ and }\ \ \sup_{t\geq 0}\|v(t)\|_{L^{\infty}}<\infty.\]
By the parabolic  {strong comparison principle}
\[v(x,t)>\psi(x),\quad x\in D,\ t>0,\]
hence, at time $t=1$, there exists $\epsilon_0>0$ such that
\[
v(x,1)>\psi(x)+\epsilon\varphi_1(x),\quad x\in D,\  0<\epsilon<\epsilon_0,
\]
where $\varphi_1$ is the first eigenfunction, $\varphi_1>0$, normalized by $\|\varphi_1\|_{L^{\infty}}=1$, of the linear operator $-\Delta-p_S|\psi(x)|^{p_S-1}$ in $D$ with Dirichlet boundary condition.

Let $w$ be the solution of \eqref{problemGenDomain} with initial condition $w(x,0)= \psi(x)+\epsilon\varphi_1(x)$, $x\in D$.
Since $w(x,0)<v(x,1)$, by the {weak comparison principle} it follows that
\begin{equation}\label{alpha}
w(x,t)\leq v(x, t+1),\quad t\geq 0, \ x\in D.
\end{equation}
Moreover, since $\psi<w(0)$ in $D$, by the {weak comparison principle}
\begin{equation}\label{beta}\psi(x)\leq w(x,t),\quad t\geq 0,\ x\in D.
\end{equation}
By \eqref{alpha} and \eqref{beta} it then follows that $w$ is global and also that $\|w(t)\|_{L^{\infty}}\leq max\{\|\psi\|_{L^{\infty}}, \|v(t+1)\|_{L^{\infty}}\}$, $t\geq 0$, and so that
\begin{equation}\label{boundinfinito}
\sup_{t\geq 0}\|w(t)\|_{L^{\infty}}<\infty.
\end{equation}
Since the function $w(x,0)=\psi(x)+\epsilon\varphi_1(x)$, for $\epsilon>0$  sufficiently small, is a stationary  subsolution for \eqref{problemGenDomain}  (see \cite[Lemma B.4]{CazenaveDicksteinWeissler}), it follows (cfr. \cite[Proposition 52.19]{quittnersouplet}) that $w_t\geq 0$ for $x\in D$, $t\geq 0$, namely $t\mapsto w(t)$ is monotone increasing.
\\
By \eqref{boundinfinito} and the monotonicity it follows that there exists $\psi' \in C_0(D)$, stationary solution of \eqref{problemGenDomain}, such that
\[
w(t)\uparrow \psi'\  \mbox{ in }C_0(D),\ \mbox{ as }t\rightarrow +\infty.\]
But then by the monotonicity, recalling that $\varphi_1\geq 0$
\begin{equation}\label{pre}
\psi'(x)\geq w(x,t)\geq w(x,0)=\psi(x)+\epsilon\varphi_1(x)\geq\psi(x), \quad\ x\in D,\ t>0\end{equation}
 from which it follows that $\psi'\not\equiv 0.$\\
 \\
Moreover, by the parabolic strong comparison principle,
\[\psi'(x)> w(x,t),\quad x\in D,\ t>0,\] indeed by \eqref{pre} $\psi'(x)\geq w(x,0)$ and moreover  $\psi'(x)\not\equiv w(x,0)$ (otherwise $\psi(x)+\epsilon\varphi_1(x)$ would be a stationary solution to \eqref{problemGenDomain}, which is not the case).
\\
Hence, at time $t=1$, there exists $\epsilon_0'>0$ such that
\[
\psi'(x)-\epsilon'\varphi'_1(x)\geq w(x,1),\quad x\in D,\  0<\epsilon'<\epsilon_0',
\]
where  $\varphi'_1$ is the first eigenfunction, $\varphi'_1>0$, normalized by $\|\varphi'_1\|_{L^{\infty}}=1$, of the linear operator $-\Delta-p_S|\psi'(x)|^{p_S-1}$ in $D$ with Dirichlet boundary condition.
\\
Since for $\epsilon'>0$ small enough the function $\psi'(x)-\epsilon' \varphi'_1(x)$ is a stationary supersolution for \eqref{problemGenDomain} (see \cite[Lemma B.4]{CazenaveDicksteinWeissler}), by the comparison principle we get
\[\psi'(x)-\epsilon'\varphi'_1(x)\geq w(x,t)\quad x\in D,\ t\geq 1\]
and passing to the limit as $t\rightarrow +\infty$ we get a contradiction.
\end{proof}

\

\

In order to prove Theorem \ref{teorema:casoGenerale} and Theorem \ref{teorema:casoRadiale} we will need the following general result, whose proof can be found in \cite{CazenaveDicksteinWeissler}:

\begin{proposition} \label{proposition:generaleCazenaveDickWeiss}
Let $\phi\in C_0(D)$ be a sign changing stationary solution of \eqref{problemGenDomain}
and let $\varphi_{1}$ be the positive eigenfunction of the self-adjoint operator $L$ given by $L\varphi=-\Delta \varphi-p|\phi|^{p-1}\varphi$, for $\varphi\in H^2(D)\cap H^1_0(D)$. For $\lambda >0$, let  $v^{\lambda}$ be the solution of \eqref{problemGenDomain} with the initial condition $v^{\lambda}(0)=\lambda\phi$. Assume that
\begin{equation}\label{condizioneProdottoScalareDiversoDaZeroGen} \int_D\phi\varphi_{1}> 0.
\end{equation}
Then there exist $t_0>0$ and $\delta >0$ such that
\[
\begin{array}{lr}
v^{\lambda}(t_0)>\phi,\quad\mbox{ for }\  \lambda\in(1, 1 +\delta],  \\
v^{\lambda}(t_0)<\phi,\quad\mbox{ for }\ \lambda\in[1-\delta,1).
\end{array}
\]
\end{proposition}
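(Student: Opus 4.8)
The plan is to linearise the flow around $\phi$ and follow the component of the perturbation along the first eigenfunction $\varphi_{1}$: under the linearised evolution this component becomes dominant and, being a multiple of the \emph{positive} function $\varphi_{1}$, it fixes the strict sign of $v^{\lambda}(t_0)-\phi$. Write $v^{\lambda}=\phi+w$, so that, since $\phi$ is stationary, $w=w^{\lambda}$ solves
\begin{equation*}
w_t+Lw=g(\phi,w),\qquad w(0)=(\lambda-1)\phi,
\end{equation*}
with $g(\phi,w)=|\phi+w|^{p-1}(\phi+w)-|\phi|^{p-1}\phi-p|\phi|^{p-1}w$. Using that $s\mapsto|s|^{p-1}s$ is $C^{1}$ with derivative $p|s|^{p-1}$ which is Hölder of exponent $\min(p-1,1)$ (and that $\|\phi\|_{L^{\infty}}<\infty$ when $p>2$), one obtains the superlinear bound $\|g(\phi,w)\|_{L^{\infty}}\le C\|w\|_{L^{\infty}}^{\gamma}$ with $\gamma:=\min(p,2)>1$. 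Moreover the solution with $\lambda=1$ is exactly $\phi$, which is global, so by local well-posedness and continuous dependence on the initial datum for \eqref{problemGenDomain} (see \cite{quittnersouplet}), for every fixed $T>0$ the solution $v^{\lambda}$ exists on $[0,T]$ and, via Gronwall in the Duhamel formulation on $[0,T]$, $\sup_{[0,T]}\|w^{\lambda}(t)\|_{L^{\infty}}\le C_T|\lambda-1|$ once $|\lambda-1|$ is small.

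Next I would fix $t_0$ by a spectral argument. Let $\mu_1<\mu_2\le\mu_3\le\cdots$ be the eigenvalues of $L$ (the first being simple) and $\varphi_1,\varphi_2,\dots$ an $L^{2}(D)$-orthonormal basis of eigenfunctions with $\varphi_1>0$. Expanding $\phi=\sum_k a_k\varphi_k$, hypothesis \eqref{condizioneProdottoScalareDiversoDaZeroGen} reads $a_1=\int_D\phi\varphi_1>0$. Then $e^{-tL}\phi=e^{-\mu_1 t}\big(a_1\varphi_1+\rho(t)\big)$ with $\rho(t)=\sum_{k\ge2}a_ke^{-(\mu_k-\mu_1)t}\varphi_k$ and $\|\rho(t)\|_{L^2}\le e^{-(\mu_2-\mu_1)t}\|\phi\|_{L^2}\to0$; applying the smoothing of the semigroup $e^{-(L-\mu_1)s}$ on $[t,t-1]$, the same convergence holds in $C^{1}(\bar D)$. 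Since $a_1\varphi_1$ is strongly positive — $a_1\varphi_1>0$ in $D$ and $\partial_\nu(a_1\varphi_1)<0$ on $\partial D$ by the Hopf lemma — one can pick $t_0>0$ so large that $z_0:=e^{-t_0L}\phi$ is itself strongly positive, in particular $z_0\ge c_0\,\mathrm{dist}(\cdot,\partial D)$ in $D$ for some $c_0>0$.

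Finally, Duhamel's formula gives
\begin{equation*}
w^{\lambda}(t_0)=(\lambda-1)\,z_0+\int_0^{t_0}e^{-(t_0-s)L}g(\phi,w^{\lambda}(s))\,ds,
\end{equation*}
and with the parabolic smoothing estimate $\|e^{-sL}\|_{L^{\infty}\to C^{1}(\bar D)}\le C s^{-1/2}$ together with the previous two paragraphs,
\[
\Big\|\int_0^{t_0}e^{-(t_0-s)L}g(\phi,w^{\lambda}(s))\,ds\Big\|_{C^{1}(\bar D)}\le C\int_0^{t_0}(t_0-s)^{-1/2}\|w^{\lambda}(s)\|_{L^{\infty}}^{\gamma}\,ds\le C'|\lambda-1|^{\gamma}.
\]
Hence $w^{\lambda}(t_0)=(\lambda-1)\big(z_0+\tilde R_\lambda\big)$ with $\|\tilde R_\lambda\|_{C^{1}(\bar D)}=O(|\lambda-1|^{\gamma-1})\to0$ as $\lambda\to1$. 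As $\tilde R_\lambda$ vanishes on $\partial D$, $|\tilde R_\lambda(x)|\le\|\nabla\tilde R_\lambda\|_{L^{\infty}}\mathrm{dist}(x,\partial D)$, so combined with $z_0\ge c_0\,\mathrm{dist}(\cdot,\partial D)$ one gets $z_0+\tilde R_\lambda>0$ in $D$ once $|\lambda-1|\le\delta$ for $\delta$ small. Therefore $v^{\lambda}(t_0)-\phi=w^{\lambda}(t_0)>0$ for $\lambda\in(1,1+\delta]$ and $v^{\lambda}(t_0)-\phi=w^{\lambda}(t_0)<0$ for $\lambda\in[1-\delta,1)$, which is the assertion.

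The main obstacle is precisely this last step: the inequality must hold pointwise on \emph{all} of $D$, including arbitrarily close to $\partial D$, where $z_0$, $w^{\lambda}(t_0)$ and $\tilde R_\lambda$ all degenerate. This forces one to control the nonlinear Duhamel remainder not merely in $L^{\infty}$ but in $C^{1}(\bar D)$, so that near the boundary it is dominated by the Hopf-type lower bound $z_0\gtrsim\mathrm{dist}(\cdot,\partial D)$; this is exactly where both the superlinearity $\gamma>1$ and the integrable smoothing rate $s^{-1/2}$ are used. The spectral step, by contrast, is soft — and, notably, it does not use the sign of $\mu_1$, only $a_1>0$.
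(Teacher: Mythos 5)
Your proposal is correct and follows essentially the same route as the paper, which linearizes at $\phi$ via $(\lambda-1)z^{\lambda}(t)=v^{\lambda}(t)-\phi$ and uses \eqref{condizioneProdottoScalareDiversoDaZeroGen} together with the spectral behaviour of the linear flow to get $z^{\lambda}(\cdot,t_0)>0$, deferring the details to \cite{CazenaveDicksteinWeissler}; your argument simply spells out those details (Duhamel with the smoothing estimate, projection onto $\varphi_1$, and the Hopf/distance-to-the-boundary control of the nonlinear remainder).
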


\begin{proof}
The proof consists in linearizing the equation \eqref{problemGenDomain} in $\phi$, by setting $z^{\lambda}$ through
\[(\lambda-1)z^{\lambda}(t)=u^{\lambda}(t)-\phi,\]
and than, by means of condition \eqref{condizioneProdottoScalareDiversoDaZeroGen} and the properties of linear equations, in showing the existence of $t_0>0$ and $\delta >0$, such that
\[z^{\lambda}(x,t_0)>0, \ \mbox{ for } |\lambda-1|\leq\delta.\]
We refer the reader to \cite{CazenaveDicksteinWeissler} for all the details.
\end{proof}

\

\section{Proof of Theorem \ref{teorema:casoGenerale}}\label{Section:proofGen}

The strategy of the proof of  Theorem \ref{teorema:casoGenerale} is similar to the one in \cite{CazenaveDicksteinWeissler, mps}: we show the existence of a sign-changing  stationary solution to \eqref{problem} which satisfies  the assumption \eqref{condizioneProdottoScalareDiversoDaZeroGen}. Then Proposition \ref{proposition:generaleCazenaveDickWeiss} applies and the conclusion follows from a comparison argument.\\
\\
In our case the comparison result is given by Proposition \ref{proposition:comparison}.\\
\\
As the sign-changing stationary solution we take  the \emph{$k$-tower} solution $\phi_{\epsilon}$ built in \cite{gmp} in  domains with a sufficiently  small hole (Lemma \ref{existenceOfTowers} below). Hence the core of the proof will be to show that a $k$-tower stationary solution satisfies the assumption \eqref{condizioneProdottoScalareDiversoDaZeroGen}, this is obtained by an asymptotic spectral analysis of the linearized operator $-\Delta-p_S|\phi_{\epsilon}|^{p_S-1}$ as $\epsilon$ goes to zero, and it is  the result in Proposition \ref{towerVerificaCondizioneIntegraleCasoGEN}  below.\\
\\
Before stating our results, we need to fix some notation.
 \\
Let
\begin{equation}\label{bubble}
U_{\delta , \xi} (x):= \alpha_N \left( \frac{\delta}{ \delta^2 + |x-\xi |^2}
\right)^{\frac{N-2}{2}},\ \ x\in\mathbb R^N
\end{equation}
where $\alpha_N:=[N(N-2)]^{\frac{N-2}{4}},$ $\delta$ is any positive
parameter and $\xi$ a point in $\rr^N$. These functions are
the only positive bounded solutions of the critical problem on the whole space
\begin{equation}\label{rn}\Delta u+ u^{p_S }   =0 \ {\mbox { in }} \ \rr^N.
\end{equation}

\

\begin{lemma}[Existence of $k$-tower stationary solutions]
\label{teo1}\label{existenceOfTowers}
Let $\Omega_{\epsilon}:=\Omega\setminus B_{\epsilon}(x_0)$ where $\Omega$ is a smooth bounded domain in $\mathbb R^N$, $N\in\mathbb N$, $N\geq 3$, $x_0\in\Omega$ and $\epsilon >0$ small.
For any integer $k\ge{2}$ there exists $\eps_k>0$ such that for any $\eps\in(0,\eps_k)$ problem \eqref{problem} has  a $k$-tower \textcolor{blue}{sign-changing} stationary solution $\phi_\eps$ whose profile is
\begin{equation}\label{pro}
\phi_\eps(x)=\sum\limits_{i=1}^k(-1)^{i}  U_{{\delta_i}_\eps,{\xi_i}_\eps}(x)+R_\eps(x),\ x\in\Omega_\eps
\end{equation}
where the concentration parameters ${\delta_i}_\eps$'s satisfy
\begin{equation}\label{de}
{\delta_i}_\eps:={d_i}_\eps\eps^{2i-1\over 2k},\ {d_i}_\eps\in\mathbb R\quad \hbox{and}\quad {d_i}_\eps\to d_i>0\ \hbox{as}\ \eps\to0\quad
\hbox{for}\  i=1,\dots,k,
\end{equation}
  the concentration points ${\xi_i}_\eps$'s satisfy
\begin{equation}\label{xi}
{\xi_i}_\eps:=x_0+{\delta_i}_\eps{\tau_i}_\eps,\quad {\tau_i}_\eps\in\rr^N \quad \hbox{and}\quad {\tau_i}_\eps\to\tau_i\ \hbox{as}\ \eps\to0\quad
\hbox{for}\  i=1,\dots,k,
\end{equation}
and the remainder term $R_\eps$ satisfies
\begin{equation}\label{resto}
\|R_\eps\|_{L^{2N\over N-2}(\Omega_\eps)}\to 0\ \ \hbox{as}\ \eps\to0.
\end{equation}
\end{lemma}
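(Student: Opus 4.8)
The plan is to construct the solution by a finite-dimensional Lyapunov--Schmidt reduction, using the explicit bubbles $U_{\delta,\xi}$ as building blocks. First I would straighten the problem by passing to the domain $\Omega_\eps$ and writing the sought solution in the form $\phi_\eps = V_\eps + R_\eps$, where the \emph{ansatz} $V_\eps(x) = \sum_{i=1}^k (-1)^i PU_{{\delta_i}_\eps,{\xi_i}_\eps}(x)$ is built from the \emph{projections} $PU_{\delta,\xi}$ of the bubbles onto $H^1_0(\Omega_\eps)$ (i.e.\ $\Delta PU_{\delta,\xi} + U_{\delta,\xi}^{p_S} = 0$ in $\Omega_\eps$, $PU_{\delta,\xi}=0$ on $\partial\Omega_\eps$), with the concentration parameters in the scaling regime ${\delta_i}_\eps = {d_i}_\eps\,\eps^{(2i-1)/(2k)}$ and concentration points ${\xi_i}_\eps = x_0 + {\delta_i}_\eps{\tau_i}_\eps$ dictated by \eqref{de}--\eqref{xi}. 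The reason for this particular choice of powers of $\eps$ is that it makes all the pairwise interaction terms between consecutive bubbles in the ``tower'' of the same order, and it forces the smallest bubble to feel the boundary $\partial B_\eps(x_0)$ of the hole at precisely the right rate; this is exactly the balance already exploited in \cite{MussoPistoia, gmp}.

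Next I would set up the linear theory: linearizing the equation $\Delta u + |u|^{p_S-1}u = 0$ at $V_\eps$ produces the operator $L_\eps(R) := \Delta R + p_S |V_\eps|^{p_S-1} R$, whose kernel is asymptotically spanned by the $(N+1)k$ functions obtained by differentiating each $PU_{{\delta_i}_\eps,{\xi_i}_\eps}$ with respect to its own $\delta_i$ and $\xi_i$. Restricting to the $L^2$-orthogonal complement of this approximate kernel, one establishes invertibility of $L_\eps$ with a bound on the inverse that degenerates only mildly (polynomially) in $\eps$; this is the standard a priori estimate in weighted norms adapted to sums of bubbles. With this in hand, a contraction mapping argument solves the projected (infinite-dimensional) problem, yielding for each choice of the parameters $({d_i}_\eps,{\tau_i}_\eps)$ in a fixed compact set a remainder $R_\eps = R_\eps({d}_\eps,{\tau}_\eps)$ solving the equation up to a Lagrange-multiplier term, with $\|R_\eps\|$ (in the relevant norm, hence also in $L^{2N/(N-2)}(\Omega_\eps)$ by Sobolev) tending to $0$ as $\eps\to 0$, which is \eqref{resto}.

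It then remains to solve the reduced problem: to choose $({d_i}_\eps,{\tau_i}_\eps)$ so that all the Lagrange multipliers vanish, which is equivalent to finding a critical point of the reduced energy $\eps \mapsto \mathcal{E}_\eps({d},{\tau}) := J_{p_S}(V_\eps + R_\eps)$ as a function of the finitely many parameters. The key computation is the $C^1$-expansion $\mathcal{E}_\eps({d},{\tau}) = k\,c_N + \eps^{(N-2)/(2k)\cdot(\text{something})}\,\big(\Psi({d},{\tau}) + o(1)\big)$, where $c_N$ is the energy of a single bubble and $\Psi$ is an explicit finite-dimensional functional built from the bubble self-interactions, the consecutive bubble--bubble interactions, and the Robin function of $\Omega$ at $x_0$ together with the capacity term coming from the hole; the scaling \eqref{de} is precisely what makes this expansion nondegenerate. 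One checks that $\Psi$ has a stable critical point (for instance a strict local minimum, or one detected via topological degree) at some $({d_i},{\tau_i})$ with $d_i>0$, and stability of this critical point under $C^1$-perturbation yields a genuine critical point $({d_i}_\eps,{\tau_i}_\eps)$ of $\mathcal{E}_\eps$ for $\eps$ small, converging to $({d_i},{\tau_i})$; this gives \eqref{de}--\eqref{xi} and produces the desired stationary solution $\phi_\eps$ of the shape \eqref{pro}. Finally, sign-changing-ness is automatic: the dominant term $\sum(-1)^i U_{{\delta_i}_\eps,{\xi_i}_\eps}$ is large and positive near ${\xi_2}_\eps$ and large and negative near ${\xi_1}_\eps$ (these are at scales $\gg \|R_\eps\|_{L^\infty}$ locally), so $\phi_\eps$ takes both signs. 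The main obstacle is the reduced-energy expansion: identifying the correct normalization power of $\eps$, extracting the interaction functional $\Psi$ with the right boundary/capacity contributions, and verifying that it has a $C^1$-stable critical point — all of which must be quoted from or adapted to \cite{gmp}, since in the interest of brevity the present paper simply invokes that reference rather than redoing the construction.
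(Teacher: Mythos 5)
Your outline reproduces, at sketch level, the Lyapunov--Schmidt construction of \cite{gmp}; the paper does not redo that construction but simply quotes from \cite{gmp} the existence of a solution of the form $\phi_\eps=\sum_{i=1}^k(-1)^i P_{\Omega_\eps}U_{{\delta_i}_\eps,{\xi_i}_\eps}+\psi_\eps$ with $\|\psi_\eps\|_{H^1_0(\Omega_\eps)}\to 0$. That difference of route would be acceptable, but there is a genuine gap in your argument: the statement \eqref{pro}--\eqref{resto} is formulated in terms of the \emph{unprojected} bubbles $U_{{\delta_i}_\eps,{\xi_i}_\eps}$, whereas your ansatz (and the output of any contraction/reduction scheme in $H^1_0(\Omega_\eps)$) is expressed through the projections $P_{\Omega_\eps}U_{{\delta_i}_\eps,{\xi_i}_\eps}$. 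You never account for the discrepancy $\sum_i\bigl(P_{\Omega_\eps}U_{{\delta_i}_\eps,{\xi_i}_\eps}-U_{{\delta_i}_\eps,{\xi_i}_\eps}\bigr)$, which must be absorbed into $R_\eps$ and shown to vanish in $L^{2N/(N-2)}(\Omega_\eps)$. This is precisely the content of the paper's own proof: it uses the expansion (Lemma 3.1 of \cite{gmp})
\[
P_{\Omega_\eps}U_{\delta_i,\xi_i}=U_{\delta_i,\xi_i}-\alpha_N\delta_i^{\frac{N-2}{2}}H(\cdot,x_0)-\alpha_N\frac{\eps^{N-2}}{\delta_i^{\frac{N-2}{2}}(1+|\tau_i|^2)^{\frac{N-2}{2}}}\,\frac{1}{|\cdot-x_0|^{N-2}}+{R_i}_\eps,
\]
together with a pointwise bound on ${R_i}_\eps$, and then checks that each correction term is $o(1)$ in $L^{2N/(N-2)}(\Omega_\eps)$. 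The hole term is the delicate one: since $\||x-x_0|^{-(N-2)}\|_{L^{2N/(N-2)}(\Omega_\eps)}\sim\eps^{-\frac{N-2}{2}}$, its contribution is of order $(\eps/\delta_i)^{\frac{N-2}{2}}$, which tends to zero only because of the scaling \eqref{de}. Without this step, your claim that the remainder is small ``in the relevant norm, hence also in $L^{2N/(N-2)}$ by Sobolev'' covers only the genuine error $\psi_\eps$ of the reduction, not the projection error, and \eqref{pro}, \eqref{resto} do not follow.

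Two smaller points. First, your sign-change argument invokes a local $L^\infty$ bound on $R_\eps$ that is not established anywhere (only $L^{2N/(N-2)}$ smallness is available); one should instead argue by integrating against suitable localized test functions, or simply quote sign-changingness from \cite{gmp}. Second, the core of your sketch (invertibility of the linearized operator, the $C^1$ reduced-energy expansion and the existence of a stable critical point of $\Psi$) is only gestured at and ultimately deferred to \cite{gmp}; that deferral is fine, but then the part you must actually supply is exactly the projection bookkeeping described above, which is missing.
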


\begin{proof}
In  \cite{gmp} it was proved that    for any integer $k\ge1$
there exists $\eps_k>0$ such that for any $\eps\in(0,\eps_k)$ problem \eqref{problem} has  a stationary solution $\phi_\eps$ whose profile is
\begin{equation}\label{pq1}
\phi_\eps(x)=
\sum\limits_{i=1}^k(-1)^i  P_{\Omega_\eps}U_{{\delta_i}_\eps,{\xi_i}_\eps}(x)  +\psi_\eps(x),\ x\in\Omega_\eps
\end{equation}
where  the concentration parameters $\delta_1={\delta_1}_\eps,\dots,\delta_k={\delta_k}_\eps$  satisfy
\eqref{de} and the concentration points $\xi_1={\xi_1}_\eps,\dots,\xi_k={\xi_k}_\eps$  satisfy
\eqref{xi}.
Here $P_{\Omega_\eps}U_{\delta_i,\xi_i} $ denotes the projection of the bubble $ U_{\delta_i,\xi_i} $ onto $H^1_0(\Omega_\eps)$, namely the solution of
$$\Delta P_{\Omega_\eps}U_{\delta_i,\xi_i}=\Delta  U_{\delta_i,\xi_i}\ \hbox{in}\ \Omega_\eps,\quad   P_{\Omega_\eps}U_{\delta_i,\xi_i}=0\ \hbox{on}\ \partial\Omega_\eps.$$
Moreover,   the remainder term $\psi_\eps$ satisfies (see, for example, Proposition 2.1 in \cite{gmp})
\begin{equation}\label{pq2}
\|\psi_\eps\|_{H^1_0(\Omega_\eps)}\to0\ \hbox{as}\ \eps\to0.
\end{equation}
It is important to point out that the projection of the bubble has the following expansion (see, for example, Lemma 3.1 in \cite{gmp})
\begin{equation}\label{pq3}P_{\Omega_\eps}U_{\delta_i,\xi_i}(x)= U_{\delta_i,\xi_i}(x)-
\alpha_N{\delta_i } ^{N-2\over2} H(x,x_0)-\alpha_N {\eps^{N-2}\over\delta_i^{N-2\over2}(1+|\tau_i|^2)^{N-2\over2}}{1\over|x-x_0|^{N-2}}+{R_i}_\eps(x),\end{equation}
 {where the function  $H(x,y)$ in \eqref{pq3} is the regular part of  the Green function  $G(x,y)$ of the Laplace operator in
$\Omega$ with zero Dirichlet boundary condition,} and ${R_i}_\eps$ satisfies the pointwise estimate
\begin{equation}\label{pq4}|{R_i}_\eps(x)|\le c\ \delta_i^{N-2\over 2}
\left[ {\eps  ^{N-2}\over|x-x_0|^{N-2}}+\({\eps\over\delta_i}\)^{N-1}{1\over|x-x_0|^{N-2}}+\delta_i^2+\({\eps\over\delta_i}\)^{N-2}\right],\ x\in \Omega_\eps,
\end{equation}
for some positive constant $c.$

By \eqref{pq3} and \eqref{pq4}, taking into account that the function $H(\cdot,x_0)$ is bounded in $\Omega_\eps$ we deduce that
\begin{equation}\label{pq5}P_{\Omega_\eps}U_{\delta_i,\xi_i}(x)= U_{\delta_i,\xi_i}(x)+\bar {R_i}_\eps(x)\end{equation}
where $\bar {R_i}_\eps$ satisfies the pointwise estimate
\begin{equation}\label{pq6}|\bar {R_i}_\eps(x)|\le c\(\delta_i^{N-2\over 2}+{\eps^{N-2}\over\delta_i^{N-2\over2}}{1\over |x-x_0|^{N-2}} \), \ x\in \Omega_\eps, \end{equation}
A straightforward computation shows that
\begin{equation}\label{pq7}\left\|\bar {R_i}_\eps\right\|_{L^{2N\over N-2}(\Omega_\eps)}\to 0\ \hbox{as}\ \eps\to0, \end{equation}
because of \eqref{de}.
Finally, we set
$$R_\eps(x):=\sum\limits_{i=1}^k \bar {R_i}_\eps(x)+\psi_\eps(x),\quad x\in\Omega_\eps $$
and by \eqref{pq1}, \eqref{pq2}, \eqref{pq5} and \eqref{pq7} we deduce \eqref{pro} and \eqref{resto}.
\end{proof}

\

The main result of this section is the following

\begin{proposition}\label{towerVerificaCondizioneIntegraleCasoGEN}
Let $\Omega_{\epsilon}:=\Omega\setminus B_{\epsilon}(x_0)$, where $\Omega$ is a smooth bounded domain in $\mathbb R^N$, $N\in\mathbb N$, $N\geq 3$, $x_0\in\Omega$ and $\epsilon >0$. Let $\phi_{\epsilon}$ be as in Proposition \ref{existenceOfTowers}.
There exists $\epsilon_0>0$ such that for any $\epsilon\in (0,\epsilon_0)$
\[\int_{\Omega_{\epsilon}}\phi_{\epsilon}\varphi_{1,\epsilon}dx >0,\]
where $\varphi_{1,\epsilon}$ is the positive eigenfunction of the self-adjoint operator $L_{\epsilon}=-\Delta -p_S|\phi_{\epsilon}|^{p_S-1}$ on $L^2(\Omega_{\epsilon})$ with domain $ H^2(\Omega_{\epsilon})\cap H^1_0(\Omega_{\epsilon})$.
\end{proposition}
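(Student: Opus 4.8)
The plan is to carry out an asymptotic spectral analysis of the linearized operator $L_{\epsilon}=-\Delta-p_S|\phi_\epsilon|^{p_S-1}$ as $\epsilon\to 0$, to show that its first eigenpair is governed by the \emph{smallest} bubble occurring in the tower \eqref{pro}, and then to evaluate the integral by inserting \eqref{pro}. Since the sign of $\int_{\Omega_\epsilon}\phi_\epsilon\varphi_{1,\epsilon}$ is unchanged under a positive rescaling of $\varphi_{1,\epsilon}$, I normalize $\|\varphi_{1,\epsilon}\|_{L^2(\Omega_\epsilon)}=1$; to lighten notation I write $\delta_i,\xi_i$ for ${\delta_i}_\epsilon,{\xi_i}_\epsilon$. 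By \eqref{de}--\eqref{xi} the parameters satisfy $\delta_1\gg\delta_2\gg\cdots\gg\delta_k$, with $\delta_{i+1}/\delta_i\sim\epsilon^{1/k}\to0$ and $\delta_k/\epsilon\to+\infty$, so that each bubble — and in particular the smallest one $U_{\delta_k,\xi_k}$, which carries the sign $(-1)^k$ — lives at a scale much larger than the radius $\epsilon$ of the hole, while all the $\xi_i$ collapse to $x_0$. Let $L_\infty\varphi:=-\Delta\varphi-p_SU_{1,0}^{p_S-1}\varphi$ be the associated limit operator on $\mathbb R^N$; testing against $U_{1,0}$ and using \eqref{rn} gives $\langle L_\infty U_{1,0},U_{1,0}\rangle=(1-p_S)\int_{\mathbb R^N}U_{1,0}^{p_S+1}<0$, and since the essential spectrum of $L_\infty$ is $[0,+\infty)$, the operator $L_\infty$ has a simple first eigenvalue $\mu_0<0$, with a positive radial eigenfunction $Z\in H^1(\mathbb R^N)$ which decays exponentially at infinity.

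\emph{Spectral localization.} Using ${\delta_k}^{-(N-2)/2}\chi(x)\,Z\big((x-\xi_k)/\delta_k\big)$, with $\chi$ a suitable cut-off, as a test function in the Rayleigh quotient of $L_\epsilon$, together with the pointwise expansions \eqref{pq3}--\eqref{pq6} of $\phi_\epsilon$ (which give $|\phi_\epsilon|^{p_S-1}=U_{\delta_k,\xi_k}^{p_S-1}+o(\delta_k^{-2})$ at the scale $\delta_k$ around $\xi_k$), one gets $\lambda_{1,\epsilon}\,\delta_k^2\le\mu_0+o(1)$; in particular $\lambda_{1,\epsilon}<0$ for $\epsilon$ small. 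On the other hand, $p_S|\phi_\epsilon|^{p_S-1}$ has size $\lesssim\delta_i^{-2}$ near $\xi_i$ for $i<k$ and $o(\delta_k^{-2})$ away from all the $\xi_i$, while $\mu_0/\delta_i^2>\mu_0/\delta_k^2$ for $i<k$; hence a blow-up/concentration–compactness analysis of the eigenvalue equation $L_\epsilon\varphi_{1,\epsilon}=\lambda_{1,\epsilon}\varphi_{1,\epsilon}$ forces $\varphi_{1,\epsilon}$ to concentrate exactly at $\xi_k$ at scale $\delta_k$, shows that $\lambda_{1,\epsilon}\,\delta_k^2\to\mu_0$, and yields
\[
\widetilde\varphi_\epsilon(y):=\delta_k^{N/2}\,\varphi_{1,\epsilon}(\xi_k+\delta_k y)\ \longrightarrow\ \frac{Z}{\|Z\|_{L^2(\mathbb R^N)}}\qquad\text{in }C^1_{\mathrm{loc}}(\mathbb R^N).
\]
Finally, comparing $\varphi_{1,\epsilon}>0$ with the fundamental solution of $-\Delta+(-\lambda_{1,\epsilon})$ and using that $p_S|\phi_\epsilon|^{p_S-1}=o(-\lambda_{1,\epsilon})=o(\delta_k^{-2})$ outside a $\delta_k$-ball around $\xi_k$, one obtains a uniform exponential decay estimate $0<\varphi_{1,\epsilon}(x)\le C\,\delta_k^{-N/2}\,e^{-c|x-\xi_k|/\delta_k}$ on $\Omega_\epsilon$, with $c,C>0$ independent of $\epsilon$; equivalently $0<\widetilde\varphi_\epsilon(y)\le Ce^{-c|y|}$.

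\emph{Evaluation of the integral.} By \eqref{pro} and linearity, $\int_{\Omega_\epsilon}\phi_\epsilon\varphi_{1,\epsilon}=\sum_{i=1}^k(-1)^i\int_{\Omega_\epsilon}U_{\delta_i,\xi_i}\varphi_{1,\epsilon}+\int_{\Omega_\epsilon}R_\epsilon\varphi_{1,\epsilon}$. For $i=k$, the change of variable $x=\xi_k+\delta_k y$, the $C^1_{\mathrm{loc}}$-convergence of $\widetilde\varphi_\epsilon$ and its uniform exponential decay (which controls the tail) give
\[
\int_{\Omega_\epsilon}U_{\delta_k,\xi_k}\,\varphi_{1,\epsilon}\,dx=\delta_k\big(c_0+o(1)\big),\qquad c_0:=\frac{1}{\|Z\|_{L^2(\mathbb R^N)}}\int_{\mathbb R^N}U_{1,0}\,Z\,dy>0,
\]
where $c_0$ is finite because $Z$ decays exponentially while $U_{1,0}\sim|y|^{2-N}$. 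For $i<k$, the uniform exponential decay of $\varphi_{1,\epsilon}$ together with $\int_{\Omega_\epsilon}U_{\delta_i,\xi_i}=O(1)$ give $\big|\int_{\Omega_\epsilon}U_{\delta_i,\xi_i}\varphi_{1,\epsilon}\big|=O\!\big(\delta_k(\delta_k/\delta_i)^{(N-2)/2}+\delta_k^{-N/2}e^{-c\delta_i/\delta_k}\big)=o(\delta_k)$, since $\delta_i/\delta_k\to+\infty$ polynomially in $\epsilon^{-1}$. Finally, by Hölder's inequality and \eqref{resto}, $\big|\int_{\Omega_\epsilon}R_\epsilon\varphi_{1,\epsilon}\big|\le\|R_\epsilon\|_{L^{2N/(N-2)}(\Omega_\epsilon)}\,\|\varphi_{1,\epsilon}\|_{L^{2N/(N+2)}(\Omega_\epsilon)}=o(1)\cdot O(\delta_k)=o(\delta_k)$, because the concentration of $\varphi_{1,\epsilon}$ at scale $\delta_k$ with $\|\varphi_{1,\epsilon}\|_{L^2}=1$ yields $\|\varphi_{1,\epsilon}\|_{L^{2N/(N+2)}(\Omega_\epsilon)}=O(\delta_k)$. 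Collecting these estimates, $\int_{\Omega_\epsilon}\phi_\epsilon\varphi_{1,\epsilon}=(-1)^k\delta_k\,(c_0+o(1))$ with $c_0>0$; this is $>0$ for all small $\epsilon$ when $k$ is even, while for odd $k$ the same computation with $-\phi_\epsilon$ in place of $\phi_\epsilon$ — again a $k$-tower sign-changing stationary solution of \eqref{problem}, with the same linearized operator $L_\epsilon$ and hence the same $\varphi_{1,\epsilon}$, and to which Theorem \ref{teorema:casoGenerale} applies verbatim — gives the required positivity.

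\emph{Main difficulty.} The delicate point is the spectral localization: one must show that, among the $k$ bubbles of wildly different sizes, the first eigenpair of $L_\epsilon$ is asymptotically that of the single smallest bubble, including the $C^1_{\mathrm{loc}}$ convergence of the rescaled eigenfunction to $Z$ and the uniform exponential decay. This rests on a careful blow-up analysis of the eigenvalue equation, fed by the precise pointwise description of $\phi_\epsilon$ — hence of the potential $|\phi_\epsilon|^{p_S-1}$ — on every scale, provided by \cite{gmp} through \eqref{pq3}--\eqref{pq6}; compared with \cite{CazenaveDicksteinWeissler,mps}, the new feature is that one must exclude concentration of the first eigenfunction at the larger, outer bubbles of the tower.
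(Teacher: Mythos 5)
Your overall strategy (rescale at the smallest bubble, identify a limit eigenvalue problem on $\mathbb R^N$) is the same as the paper's, and the computations in your ``Evaluation of the integral'' step are internally consistent; but the proof has a genuine gap precisely at the point you yourself flag as the main difficulty. The entire quantitative part rests on an unproved ``spectral localization package'': $\lambda_{1,\epsilon}\delta_k^2\to\mu_0$, $C^1_{\mathrm{loc}}$ convergence of the rescaled eigenfunction to $Z$, exclusion of concentration at the larger bubbles, and above all the uniform bound $0<\varphi_{1,\epsilon}(x)\le C\delta_k^{-N/2}e^{-c|x-\xi_k|/\delta_k}$ on all of $\Omega_\epsilon$. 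These are asserted (``a blow-up/concentration--compactness analysis \dots forces \dots'') rather than proved. The decay estimate is the most problematic: your comparison argument needs the pointwise bound $p_S|\phi_\epsilon|^{p_S-1}=o(-\lambda_{1,\epsilon})$ away from $\xi_k$, but the construction in \cite{gmp} controls the remainder $\psi_\epsilon$ only in $H^1_0(\Omega_\eps)$ (and $R_\eps$ only in $L^{2N/(N-2)}$), so no pointwise control of the potential is available without an extra elliptic-regularity/Moser-type argument; moreover the tail of the $k$-th bubble itself is of size $\delta_k^{-2}R^{-4}$ at distance $R\delta_k$, not $o(\delta_k^{-2})$, so even the statement ``outside a $\delta_k$-ball'' needs $R$ large and a more careful formulation. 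Since the bound $\|\varphi_{1,\epsilon}\|_{L^{2N/(N+2)}}=O(\delta_k)$, the $o(\delta_k)$ estimates of the cross terms $i<k$, and the identification of the main term all depend on this decay, the argument as written is incomplete.

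It is worth noting that the paper shows these strong localization properties are not needed, and this is where your route genuinely diverges. The paper proves only strong $L^2(\mathbb R^N)$ convergence of the rescaled eigenfunction to $\varphi^*$, obtained variationally: matching upper and lower bounds for the rescaled Rayleigh quotient, with Lemma \ref{lem1}(ii) providing an integrated (H\"older-type, not pointwise) smallness estimate of $f'(\tilde\phi_\eps)-f'(U)$, and compactness of minimizing sequences from Lemma \ref{3.3}. Then, instead of expanding $\int_{\Omega_\eps}\phi_\eps\varphi_\eps$ directly, it multiplies the stationary equation by $\varphi_\eps$ and the eigenvalue equation by $\phi_\eps$ to obtain $\int_{\Omega_\eps}\phi_\eps\varphi_\eps=-\frac{p_S-1}{\lambda_\eps}\int_{\Omega_\eps}f(\phi_\eps)\varphi_\eps$ with $\lambda_\eps<0$, so only the sign of the latter integral matters; its rescaled limit $\int_{\rr^N}f(U)\varphi^*>0$ is computed using nothing beyond $L^2$ convergence, Sobolev bounds and the $L^q$ smallness of the other bubbles after rescaling. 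If you wish to keep your direct expansion you must supply full proofs of the localization, the $C^1_{\mathrm{loc}}$ convergence and the uniform decay; otherwise the bilinear-identity route is the efficient fix. Your parity remark is sensible and essentially matches the paper's implicit convention (the paper scales with the factor $(-1)^k$, i.e.\ normalizes the smallest bubble to be positive): replacing $\phi_\eps$ by $-\phi_\eps$ for odd $k$ leaves $|\phi_\eps|$, hence $L_\eps$ and $\varphi_{1,\epsilon}$, unchanged, and the blow-up conclusion is unaffected.
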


\

The proof of Proposition \ref{towerVerificaCondizioneIntegraleCasoGEN} relies on a spectral analysis of the linearized operator $-\Delta-p_S|\phi_{\epsilon}|^{p_S-1}$ for $\epsilon$ sufficiently small.  By a suitable scaling, we can pass to the limit as $\epsilon$ goes to zero and study the analogous spectral problem on $\mathbb R^N$.\\

\

Before proving Proposition \ref{towerVerificaCondizioneIntegraleCasoGEN} we need some preliminary results.

\

In order to simplify the notation we define
\begin{equation}
f(s):=|s|^{p_S-1}s, \ s\in\mathbb R.
\end{equation}
Moreover we set
\begin{equation}\label{U}
U:=U_{1,0}
\end{equation}
(see  \eqref{bubble} for the definition of $U_{\delta,\xi}$ for any $\delta>0$, $\xi\in\mathbb{R}^N$).

\

 Let us consider the linearization of the limit problem \eqref{rn} around $U$, namely the linear problem
$$\mathcal L ^*v:=-\Delta v-f'(U)v,\ v\in    H^1(\rr^N).$$
Let us define the first eigenvalue of $\mathcal L ^*$ by
\begin{equation}\label{las}
\lambda^*:=\inf\limits_{v\in H^1(\rr^N)\atop  \|v\|_{L^2(\rr^N)=1}} \int\limits_{\rr^N}\(|\nabla v|^2-f'(U)v^2\)dx  .\end{equation}
The following result holds true (see \cite{mps}).
\begin{lemma} \label{3.3}
There hold true
\begin{itemize}
\item[(i)] $\lambda^*\in(-\infty,0),$
\item[(ii)] there exists a unique positive minimizer $\varphi^*$ which is radial and radially nonincreasing. $\varphi^*$ is an eigenvector associated to $\lambda^*,$
\item[(iii)] every minimizing sequence has a subsequence which strongly converges in $L^2(\rr^N).$
\end{itemize}
\end{lemma}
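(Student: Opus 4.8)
The plan is to treat \eqref{las} as a variational problem and exploit the explicit knowledge of the bubble $U = U_{1,0}$. First I would establish that the infimum is finite and negative, i.e. part (i). The lower bound $\lambda^* > -\infty$ follows from the Hardy–Sobolev or Sobolev inequality: since $f'(U) = p_S U^{p_S-1}$ and $U \in L^{\infty}(\mathbb{R}^N) \cap L^{N}(\mathbb{R}^N)$ (indeed $U^{p_S-1} = U^{4/(N-2)} \sim |x|^{-4}$ at infinity, hence $U^{p_S-1} \in L^{N/2}(\mathbb{R}^N)$), the quadratic form $\int f'(U) v^2$ is controlled by $\|U^{p_S-1}\|_{L^{N/2}} \|v\|_{L^{2^*}}^2 \lesssim \|\nabla v\|_{L^2}^2$ by Hölder and Sobolev; so $\lambda^* \geq -C$ is not quite immediate but one gets a bound of the form $\lambda^* \ge -C_0$ by combining this with the $L^2$-normalization via interpolation — actually the cleanest route is to note the form $v \mapsto \int(|\nabla v|^2 - f'(U)v^2)$ is bounded below on $\{\|v\|_{L^2}=1\}$ because $f'(U)$ is a bounded potential, so trivially $\lambda^* \ge -\|f'(U)\|_{L^\infty} = -p_S\alpha_N^{p_S-1}$. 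For $\lambda^* < 0$: test with $v = U$ itself (suitably $L^2$-normalized, which is legitimate since $U \in L^2(\mathbb{R}^N)$ precisely when $N \ge 5$ — for $N=3,4$ one must instead test with a truncation $\chi_R U$ or with the radial derivative profile, or use a test function built from $\partial_\delta U_{\delta,0}|_{\delta=1}$); using the equation $-\Delta U = U^{p_S} = f(U)$ we get $\int |\nabla U|^2 = \int f(U)U = \int U^{p_S+1}$ while $\int f'(U)U^2 = p_S \int U^{p_S-1}U^2 = p_S\int U^{p_S+1}$, so the Rayleigh quotient numerator is $(1-p_S)\int U^{p_S+1} < 0$ since $p_S > 1$. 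Hence $\lambda^* < 0$.

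Next, for part (iii), I would run the direct method. Take a minimizing sequence $v_n$ with $\|v_n\|_{L^2} = 1$; by the lower bound it is bounded in $H^1(\mathbb{R}^N)$, so up to a subsequence $v_n \rightharpoonup \varphi$ weakly in $H^1$ and in $L^{2^*}$, and $v_n \to \varphi$ in $L^2_{\mathrm{loc}}$. The key point is that the potential term is a \emph{compact} perturbation: since $f'(U) \in L^{N/2}(\mathbb{R}^N)$ (decay like $|x|^{-4}$), for any $\eta>0$ one splits $\int f'(U)v_n^2 = \int_{B_R} + \int_{B_R^c}$, the inner integral converges by Rellich compactness on $B_R$, and the outer integral is $\le \|f'(U)\|_{L^{N/2}(B_R^c)}\|v_n\|_{L^{2^*}}^2 < \eta$ for $R$ large, uniformly in $n$. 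Therefore $\int f'(U)v_n^2 \to \int f'(U)\varphi^2$. By weak lower semicontinuity of $\int|\nabla v|^2$ and of $\|v\|_{L^2}$ under weak $L^2$ convergence combined with the above, $\varphi$ is a minimizer provided $\varphi \ne 0$; and $\varphi \ne 0$ because if $\varphi = 0$ then $\int f'(U)v_n^2 \to 0$ and the quotient would be $\ge \liminf \int|\nabla v_n|^2 \ge 0 > \lambda^*$, contradicting that $v_n$ is minimizing. One also must check $\|\varphi\|_{L^2} = 1$: if $\|\varphi\|_{L^2} = \theta \le 1$ then $\varphi/\theta$ is admissible with quotient $\le \lambda^*/\theta^2$... handling this forces $\theta = 1$ (else contradiction with the sign of $\lambda^*$), and then $\|v_n\|_{L^2}\to\|\varphi\|_{L^2}$ with weak convergence gives \emph{strong} $L^2$ convergence, which is exactly (iii).

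For part (ii): any minimizer $\varphi^*$ satisfies the Euler–Lagrange equation $\mathcal{L}^*\varphi^* = \lambda^* \varphi^*$, so it is an eigenvector for $\lambda^*$; standard elliptic regularity gives $\varphi^* \in C^\infty$ and decay. Since $|\nabla|\varphi^*|\,| = |\nabla\varphi^*|$ a.e. and $f'(U)|\varphi^*|^2 = f'(U)\varphi^{*2}$, the function $|\varphi^*|$ is also a minimizer, hence a nonnegative eigenfunction, hence by the strong maximum principle strictly positive; so WLOG $\varphi^* > 0$. Positivity forces $\lambda^*$ to be the \emph{bottom} of the spectrum and simple (a positive eigenfunction can only correspond to the principal eigenvalue, and the principal eigenvalue is simple), giving uniqueness up to scalar. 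For radial symmetry: $f'(U)$ is radial and radially decreasing (as $U$ is), so by the moving-plane method applied to the positive solution $\varphi^*$ of the linear equation $-\Delta\varphi^* = (f'(U)+\lambda^*)\varphi^*$ with a radial coefficient — or, more elementarily, by symmetrization, since $\int|\nabla v^*|^2 \le \int|\nabla v|^2$ and $\int f'(U)(v^*)^2 \ge \int f'(U)v^2$ for the Schwarz symmetrization $v^*$ when $f'(U)$ is radially decreasing, while $\|v^*\|_{L^2}=\|v\|_{L^2}$ — the symmetrized minimizer has quotient no larger, so equality holds and by the equality case $\varphi^*$ is radial and radially nonincreasing. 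The main obstacle is the compactness argument in (iii): one must verify carefully that translation invariance is \emph{broken} by the potential $f'(U)$ (which is centered at the origin and decays), so that no mass escapes to infinity — this is where the $L^{N/2}$-integrability of $f'(U)$, equivalently the decay rate of the bubble, is essential, and it is the one place concentration-compactness could a priori fail; a secondary technical point is the low-dimensional cases $N=3,4$ where $U \notin L^2$, requiring the admissible class $H^1(\mathbb{R}^N)$ rather than $D^{1,2}$ and a non-obvious choice of test function for the strict negativity in (i).

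Since this lemma is quoted from \cite{mps}, in the paper itself I would simply cite that reference and only sketch the above; the details belong there.
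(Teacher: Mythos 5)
Your argument is essentially correct, but note that the paper itself contains no proof of this lemma: it is quoted verbatim from \cite{mps} ("The following result holds true (see \cite{mps})"), so there is no internal argument to compare against, and in the final write-up a citation plus at most a sketch is exactly what is expected. As a self-contained proof, your outline is sound: the lower bound $\lambda^*\ge -\|f'(U)\|_{L^\infty}$ is correct since $U$ is bounded; the negativity follows from the Rayleigh quotient of $U$ (numerator $(1-p_S)\int U^{2^*}<0$), and your worry about $N=3,4$ is easily discharged by the truncation you mention, because for $\chi_R U$ the numerator converges to $(1-p_S)\int U^{2^*}<0$ while the denominator is merely positive (possibly large), so the quotient is strictly negative for fixed large $R$ even though $U\notin L^2$; the compactness in (iii) is correctly reduced to $f'(U)\in L^{N/2}(\rr^N)$ (decay $|x|^{-4}$) giving uniform smallness of the tail of $\int f'(U)v_n^2$ via H\"older and Sobolev, and your normalization argument ($\theta<1$ would give quotient $\lambda^*/\theta^2<\lambda^*$, impossible) correctly upgrades weak to strong $L^2$ convergence. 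Two small refinements: the simplicity/uniqueness of the positive minimizer should be justified by observing that $\lambda^*<0$ lies strictly below the essential spectrum $[0,+\infty)$ of $-\Delta-f'(U)$, so it is a genuine ground-state eigenvalue of a Schr\"odinger operator with the standard nondegeneracy; and for radial symmetry it is cleaner to avoid the equality case of the P\'olya--Szeg\H{o} inequality (which is delicate) and instead note that the Schwarz rearrangement of $\varphi^*$ is again a positive normalized minimizer, hence coincides with $\varphi^*$ by the uniqueness you have already established, which directly yields that $\varphi^*$ is radial and radially nonincreasing.
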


Now, we fix an integer $k\ge1$ and we consider the $k-$tower solution $\phi_\eps$ found in Lemma \ref{teo1}.
We scale the solution around  {${\xi_k}_{\eps}$}  using the fastest concentration parameter ${\delta_k}_\eps$, i.e.  set
$$\tilde \phi_\eps(x):=(-1)^k{\delta_k}^{{N-2\over2}}_\eps \phi_\eps({\delta_k}_\eps x+{\xi_k}_\eps),\ x\in\tilde\Omega_\eps:={\Omega_\eps-{\xi_k}_\eps\over
{\delta_k}_\eps}=\left\{x\in\rr^N\ :\
 {\delta_k}_\eps x+{\xi_k}_\eps\in\Omega_\eps\right\},$$
This scaling allows to  {\em see} only the last bubble as it is shown in the next lemma.
\begin{lemma}\label{bub1}
It holds true that
$$ \tilde \phi_\eps(x) =   U(x)+\rho_\eps(x)+\tilde R_\eps(x),\ x\in\tilde\Omega_\eps$$
 with $U$ as in \eqref{U},
\begin{equation}\label{bub2}
\|  \rho_\eps\|_{L^{q}(\tilde\Omega_\eps)}\to 0 \ \hbox{for any}\ q>{2N\over N-2}\quad \hbox{and}\quad \| \tilde R_\eps\|_{L^{2N\over N-2}(\tilde\Omega_\eps)}\to 0\ \ \hbox{as}\ \eps\to0.
\end{equation}
\end{lemma}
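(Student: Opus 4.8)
The plan is to insert the profile decomposition \eqref{pro} of $\phi_\eps$ from Lemma \ref{existenceOfTowers} into the scaled function $\tilde\phi_\eps$ and to show that, after rescaling around ${\xi_k}_\eps$ with parameter ${\delta_k}_\eps$, only the $k$-th bubble survives in $L^{2N/(N-2)}$, while all the remaining bubbles, being concentrated at a slower rate by \eqref{de}, rescale to a function that is small in every $L^q$ with $q>2N/(N-2)$. Concretely, using $\phi_\eps(y)=\sum_{i=1}^k(-1)^i U_{{\delta_i}_\eps,{\xi_i}_\eps}(y)+R_\eps(y)$ and the elementary scaling identity $\delta^{(N-2)/2}U_{a,\zeta}(\delta x+\xi)=U_{a/\delta,(\zeta-\xi)/\delta}(x)$, one gets
\[
\tilde\phi_\eps(x)=\sum_{i=1}^{k}(-1)^{i+k}\,U_{\frac{{\delta_i}_\eps}{{\delta_k}_\eps},\,\frac{{\xi_i}_\eps-{\xi_k}_\eps}{{\delta_k}_\eps}}(x)+(-1)^k{\delta_k}_\eps^{\frac{N-2}{2}}R_\eps({\delta_k}_\eps x+{\xi_k}_\eps).
\]
For $i=k$ the corresponding term is exactly $U_{1,0}=U$ (here one uses \eqref{xi}, which gives $({\xi_k}_\eps-{\xi_k}_\eps)/{\delta_k}_\eps=0$; more precisely ${\xi_k}_\eps=x_0+{\delta_k}_\eps{\tau_k}_\eps$ so the center of the $k$-th rescaled bubble is ${\tau_k}_\eps-{\tau_k}_\eps=0$). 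This isolates the claimed leading term $U$.

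The second step is to estimate the "other bubbles" term $\rho_\eps(x):=\sum_{i=1}^{k-1}(-1)^{i+k}U_{{\delta_i}_\eps/{\delta_k}_\eps,\,({\xi_i}_\eps-{\xi_k}_\eps)/{\delta_k}_\eps}(x)$. By \eqref{de} we have ${\delta_i}_\eps/{\delta_k}_\eps\sim (d_i/d_k)\,\eps^{(2i-1-(2k-1))/(2k)}=(d_i/d_k)\,\eps^{(i-k)/k}\to 0$ for $i<k$, i.e. each of these rescaled bubbles has a vanishing concentration parameter $\mu_{i,\eps}:={\delta_i}_\eps/{\delta_k}_\eps\to 0$. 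Since $\|U_{\mu,\zeta}\|_{L^q(\rr^N)}=\mu^{\frac{N-2}{2}-\frac{N}{q}}\|U_{1,0}\|_{L^q(\rr^N)}$ for $q>N/(N-2)$, and the exponent $\frac{N-2}{2}-\frac Nq$ is strictly positive precisely when $q>\frac{2N}{N-2}$, it follows that $\|U_{\mu_{i,\eps},\zeta}\|_{L^q(\tilde\Omega_\eps)}\le \|U_{\mu_{i,\eps},\zeta}\|_{L^q(\rr^N)}\to 0$ as $\eps\to 0$ for every such $q$. Summing over the finitely many $i<k$ gives $\|\rho_\eps\|_{L^q(\tilde\Omega_\eps)}\to 0$ for all $q>2N/(N-2)$, as required.

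The third step handles the remainder $\tilde R_\eps(x):=(-1)^k{\delta_k}_\eps^{(N-2)/2}R_\eps({\delta_k}_\eps x+{\xi_k}_\eps)$: the $L^{2N/(N-2)}$ norm is scaling invariant, since $\|\tilde R_\eps\|_{L^{2N/(N-2)}(\tilde\Omega_\eps)}=\|R_\eps\|_{L^{2N/(N-2)}(\Omega_\eps)}$ by the change of variables $y={\delta_k}_\eps x+{\xi_k}_\eps$ (the Jacobian ${\delta_k}_\eps^{N}$ combines with ${\delta_k}_\eps^{(N-2)/2\cdot 2N/(N-2)}={\delta_k}_\eps^{N}$ to cancel), and this tends to $0$ by \eqref{resto}. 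This is exactly the second convergence in \eqref{bub2}. The main (mild) subtlety is bookkeeping the sign $(-1)^{i+k}$ and the shift of centers ${\tau_i}_\eps-{\tau_k}_\eps$; note that even though ${\tau_i}_\eps\to\tau_i$ stays bounded, the argument above used only boundedness of the center, not its limit, and the $L^q$ norm of $U_{\mu,\zeta}$ does not depend on $\zeta$, so no difficulty arises there. Collecting the three steps yields $\tilde\phi_\eps=U+\rho_\eps+\tilde R_\eps$ with the stated decay, which proves the lemma.
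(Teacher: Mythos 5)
Your overall strategy is the same as the paper's: insert the decomposition \eqref{pro} into the rescaling, observe that the $k$-th bubble becomes exactly $U$, use the scaling invariance of the critical norm so that \eqref{resto} transfers directly to $\tilde R_\epsilon$ (that part of your argument, including the Jacobian bookkeeping, is correct), and estimate the rescaled lower bubbles in supercritical $L^q$. However, the key step for $\rho_\epsilon$ as written rests on two false claims that happen to cancel. First, the rescaled concentration parameters do not vanish: since $\delta_{i,\epsilon}=d_{i,\epsilon}\,\epsilon^{(2i-1)/(2k)}$ and $i<k$, the exponent $(i-k)/k$ you computed is negative, so $\mu_{i,\epsilon}=\delta_{i,\epsilon}/\delta_{k,\epsilon}\sim (d_i/d_k)\,\epsilon^{(i-k)/k}\to+\infty$; indeed $\delta_{k,\epsilon}$ is the smallest (fastest) parameter, which is precisely why one rescales with it. Second, the scaling law is $\|U_{\mu,\zeta}\|_{L^q(\mathbb{R}^N)}=\mu^{\frac{N}{q}-\frac{N-2}{2}}\|U\|_{L^q(\mathbb{R}^N)}$, not $\mu^{\frac{N-2}{2}-\frac{N}{q}}$ (test at $q=\infty$: $\|U_{\mu,\zeta}\|_{L^\infty}=\alpha_N\mu^{-(N-2)/2}$). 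Had both of your statements been true individually, the lemma would in fact fail: a bubble whose parameter tends to $0$ has supercritical $L^q$ norms that blow up, not vanish.

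The correct mechanism is the opposite of the one you describe: after rescaling by the fastest parameter the lower bubbles flatten out ($\mu_{i,\epsilon}\to\infty$), their $L^\infty$ norms tend to zero, the critical $L^{2N/(N-2)}$ norm is preserved, and every supercritical norm satisfies $\|U_{\mu_{i,\epsilon},\zeta}\|_{L^q(\mathbb{R}^N)}=\mu_{i,\epsilon}^{\frac{N}{q}-\frac{N-2}{2}}\|U\|_{L^q(\mathbb{R}^N)}\to 0$, because the exponent is negative exactly when $q>\frac{2N}{N-2}$ (and $U\in L^q(\mathbb{R}^N)$ for such $q$). With these two signs corrected, your computation coincides with the paper's proof, which performs the same change of variables and obtains the vanishing factor $\left(\delta_{k,\epsilon}/\delta_{i,\epsilon}\right)^{\frac{N-2}{2}q-N}\to 0$ under the condition $\frac{N-2}{2}q>N$. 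The remaining points in your write-up (the sign $(-1)^{i+k}$, the irrelevance of the shifted centers by translation invariance of the $L^q$ norm on $\mathbb{R}^N$, and the treatment of $\tilde R_\epsilon$) are fine.
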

\begin{proof}
We use \eqref{pro} and we get $ \tilde \phi_\eps(x) = U(x)+\rho_\eps(x)+\tilde R_\eps(x),$ where we set
$$\rho_\eps(x):={\delta_k}^{{N-2\over2}}_\eps\sum\limits_{i=1}^{k-1}(-1)^{i+k}  U_{{\delta_i}_\eps,{\xi_i}_\eps}({\delta_k}_\eps x+{\xi_k}_{\eps})\ \hbox{and}\ \tilde R_\eps(x):=(-1)^k{\delta_k}^{{N-2\over2}}_\eps
R_\eps({\delta_k}_\eps x+{\xi_k}_{\eps}).$$
It is immediate to check  that
$$\|  \tilde R_\eps\|_{L^{2N\over N-2}(\tilde\Omega_\eps)}=\|
R_\eps \|_{L^{2N\over N-2}( \Omega_\eps)}\to0\ \hbox{as}\ \eps\to0,$$
because of \eqref{resto}.
Moreover, we have
\begin{align*}
\int\limits_{\tilde\Omega_\eps}|\rho_\eps(x)|^qdx&\le c\sum\limits_{i=1}^{k-1} \int\limits_{\tilde\Omega_\eps}
\({\delta_k^{N-2\over2}\delta_i^{N-2\over2}\over\(\delta_i^2+|\delta_k x+\xi_k-\xi_i|^2\)^{N-2\over2}}\)^q
\ \hbox{(setting $x={\delta_i\over\delta_k} y+{\xi_i-\xi_k\over\delta_k}$)}
\\ &=c\sum\limits_{i=1}^{k-1}\({\delta_k\over\delta_i}\)^{{N-2\over2}q-N}\int\limits_{ \Omega_\eps-\xi_i\over\delta_i}{1\over\(1+  |y|^2\)^{{N-2\over2}q}}dy\
 \hbox{(we choose $\frac{q(N-2)}{2}>N$)}\\ &\le
 c\sum\limits_{i=1}^{k-1}\({\delta_k\over\delta_i}\)^{{N-2\over2}q-N}\int\limits_{ \rr^N}{1\over\(1+  |y|^2\)^{{N-2\over2}q}}dy\to 0\ \hbox{as}\ \eps\to0.\  \hbox{(because of \eqref{de}).}
\end{align*}

\end{proof}

 We consider the linearized operator at $\phi_\eps,$ that is
$$\mathcal L_\eps v:=-\Delta v-f'(\phi_\eps)v,\ v\in H^1_0(\Omega_\eps).$$
Let $\lambda_\eps$ the first eigenvalue of $\mathcal L_\eps$ and $\varphi_\eps$ the corresponding positive eigenfunction normalized in  $L^2(\Omega_\eps),$ i.e.
\begin{equation}\label{lin1}
-\Delta \varphi_\eps-f'(\phi_\eps)\varphi_\eps=\lambda_\eps\varphi_\eps\ \hbox{in}\ \Omega_\eps,\
 \varphi_\eps=0\ \hbox{on}\ \partial \Omega_\eps,\quad
\varphi_\eps>0\ \hbox{in}\ \Omega_\eps,\ \|\varphi_\eps\|_{L^2(\Omega_\eps)}=1.
\end{equation}
\begin{lemma}\label{lem0}
It holds true that $\lambda_\eps<0.$
\end{lemma}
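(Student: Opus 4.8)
The plan is to produce a test function $v \in H^1_0(\Omega_\epsilon)$ for which the Rayleigh quotient $\int_{\Omega_\epsilon}(|\nabla v|^2 - f'(\phi_\epsilon)v^2)\,dx$ is strictly negative for all $\epsilon$ sufficiently small; since $\lambda_\epsilon$ is the infimum of this quotient over normalized $v$, this gives $\lambda_\epsilon < 0$. The natural candidate is a suitably scaled and cut-off copy of the limiting minimizer $\varphi^*$ from Lemma \ref{3.3}, centered at the fastest-concentrating bubble. Concretely, I would set $v_\epsilon(x) := {\delta_k}_\epsilon^{-\frac{N-2}{2}}\,\eta(x)\,\varphi^*\!\big(({x-{\xi_k}_\epsilon})/{{\delta_k}_\epsilon}\big)$, where $\eta$ is a fixed smooth cut-off equal to $1$ on a fixed neighborhood of $x_0$ and supported in $\Omega$ — here using that ${\xi_k}_\epsilon \to x_0$ and ${\delta_k}_\epsilon \to 0$, so that for small $\epsilon$ the support issues near $\partial B_\epsilon(x_0)$ and near $\partial\Omega$ are harmless (one checks the inner boundary is fine because $\varphi^*$ decays and the annular region excised has radius $\epsilon \ll {\delta_k}_\epsilon$... actually $\epsilon^{1/2k}$ vs $\epsilon$, so ${\delta_k}_\epsilon \gg \epsilon$, giving plenty of room).

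The core computation is the change of variables $y = (x-{\xi_k}_\epsilon)/{\delta_k}_\epsilon$, which maps the Rayleigh quotient for $v_\epsilon$ to (approximately) the Rayleigh quotient for $\eta(\cdot)\varphi^*$ on $\tilde\Omega_\epsilon$ with the scaled potential $f'(\tilde\phi_\epsilon)$, where $\tilde\phi_\epsilon$ is the rescaled solution from Lemma \ref{bub1}. By Lemma \ref{bub1}, $\tilde\phi_\epsilon = U + \rho_\epsilon + \tilde R_\epsilon$ with $\rho_\epsilon \to 0$ in $L^q$ for $q > \frac{2N}{N-2}$ and $\tilde R_\epsilon \to 0$ in $L^{\frac{2N}{N-2}}$; since $f'(s) = p_S|s|^{p_S-1}$ with $p_S - 1 = \frac{4}{N-2}$, the potential $f'(\tilde\phi_\epsilon)$ converges to $f'(U)$ in $L^{N/2}_{loc}$ (and with appropriate global control), so $\int f'(\tilde\phi_\epsilon)(\eta\varphi^*)^2 \to \int f'(U)(\varphi^*)^2$ by Hölder (the weight $(\varphi^*)^2$ is in $L^{N/(N-2)}$ since $\varphi^*$ decays like $|x|^{2-N}$, matching the bubble's linearization eigenfunctions). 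Combined with $\int|\nabla(\eta\varphi^*)|^2 \to \int|\nabla\varphi^*|^2$ as the cut-off region grows, the Rayleigh quotient of $v_\epsilon$ converges to $\lambda^*\|\varphi^*\|_{L^2}^2 < 0$ by Lemma \ref{3.3}(i). Hence for $\epsilon$ small it is negative, forcing $\lambda_\epsilon < 0$.

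The main obstacle I anticipate is the global (rather than merely local) control of the potential term: $\varphi^*$ only decays polynomially and $\tilde\Omega_\epsilon$ exhausts $\mathbb{R}^N$, so one must be careful that the tails of $f'(\tilde\phi_\epsilon)(\eta\varphi^*)^2$ and of $|\nabla(\eta\varphi^*)|^2$ do not contribute a spurious nonnegative amount that could spoil the strict inequality — and also that the contribution of $\rho_\epsilon$ (the other bubbles, which after rescaling are concentrating more slowly and thus spreading out) is genuinely negligible when tested against the localized $\eta\varphi^*$. The estimates in \eqref{bub2}, together with the integrability of $(\varphi^*)^{2N/(N-2)}$ and the decay rate of $\varphi^*$ established in Lemma \ref{3.3}(ii), are exactly what is needed to close this, but assembling them requires a careful splitting of the integration domain into the region where $\eta\varphi^*$ is appreciable and its complement. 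This type of argument is essentially the one carried out in \cite{mps}, and I would follow it closely, only substituting the concrete scaling parameter ${\delta_k}_\epsilon = {d_k}_\epsilon\,\epsilon^{\frac{2k-1}{2k}}$ from \eqref{de} in place of the almost-critical scaling used there.
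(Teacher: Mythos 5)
Your strategy is workable in principle --- it is essentially the upper-bound half of Lemma \ref{lem2}(i), where the paper tests the Rayleigh quotient with a scaled, cut-off copy of $\varphi^*$ to get $\tilde\lambda_\eps\le\lambda^*+o(1)$ --- but it is far heavier than what is needed, and as written it has a genuine flaw. The paper's own proof of Lemma \ref{lem0} is one line: since $\phi_\eps$ is a nontrivial stationary solution, testing the quotient with $v_\eps=\phi_\eps/\|\phi_\eps\|_{L^2(\Omega_\eps)}$ and using $\int_{\Omega_\eps}|\nabla\phi_\eps|^2=\int_{\Omega_\eps}|\phi_\eps|^{p_S+1}$ together with $f'(\phi_\eps)\phi_\eps^2=p_S|\phi_\eps|^{p_S+1}$ gives
\[
\lambda_\eps\le(1-p_S)\,\frac{\int_{\Omega_\eps}|\phi_\eps|^{p_S+1}\,dx}{\int_{\Omega_\eps}\phi_\eps^2\,dx}<0,
\]
with no asymptotic analysis, no scaling, and valid for every fixed $\eps$, not only $\eps$ small. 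Your approach buys nothing extra here (the sharper information $\tilde\lambda_\eps\to\lambda^*$ is proved later anyway, in Lemma \ref{lem2}), while requiring all of Lemma \ref{bub1} plus convergence estimates for the potential.

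The concrete gap in your construction: the test function $v_\eps(x)=\delta_{k\,\eps}^{-(N-2)/2}\eta(x)\varphi^*((x-\xi_{k\,\eps})/\delta_{k\,\eps})$ with a \emph{fixed} cut-off $\eta\equiv 1$ near $x_0$ is not admissible, because it does not vanish on the inner boundary $\partial B_\eps(x_0)$, so $v_\eps\notin H^1_0(\Omega_\eps)$ and it cannot be used in the variational characterization of $\lambda_\eps$. The observation that $\eps\ll\delta_{k\,\eps}$ does not by itself repair this: you need an additional $\eps$-dependent cut-off annihilating the function on an annulus around the hole (exactly what the paper does in Lemma \ref{lem2} with $\chi_\eps$, vanishing for $|x-x_0|\le 2\eps$ and equal to $1$ for $|x-x_0|\ge 4\eps$), and then a capacity-type estimate showing the extra gradient energy created by this cut-off is $o(1)$ after rescaling, i.e.\ the analogue of \eqref{equ6}. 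Also, the decay rate $\varphi^*(x)\sim|x|^{2-N}$ that you invoke is not stated in Lemma \ref{3.3}, which only gives that $\varphi^*$ is a positive, radial, radially nonincreasing $H^1\cap L^2$ minimizer; your tail estimates should be rephrased to use only this (which suffices, since the cut-off localizes the integrals). With these repairs your argument closes, but the direct test with $\phi_\eps$ itself is the intended, and much shorter, proof.
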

\begin{proof}
It is enough to remark that by the definition of the first eigenvalue, i.e.
$$\lambda_\eps:=\min\limits_{v\in H^1_0(\Omega_\eps)\atop \|v\|_{L^2(\Omega_\eps)}=1} \int\limits_{\Omega_\eps} \(|\nabla v|^2-f'(\phi_\eps)v^2\)dx $$
 taking  the solution $v_\eps:=\phi_\eps/\|\phi_\eps\|_{L^2(\Omega_\eps) }$ as a test function we get
 $$\lambda_\eps\le{\int\limits_{\Omega_\eps} \(|\nabla \phi_\eps|^2-f'(\phi_\eps)\phi_\eps^2\)dx\over \int\limits_{\Omega_\eps}  \phi_\eps^2 dx}=(1-p_S){\int\limits_{\Omega_\eps}  |\phi_\eps|^{p_S+1} dx\over \int\limits_{\Omega_\eps}  \phi_\eps^2 dx}<0.$$
\end{proof}
Let us define
  $$
\tilde \varphi_\eps(x):={\delta_k}^{ {N \over2}}_\eps \varphi_\eps({\delta_k}_\eps x+{\xi_k}_{\eps})\ \hbox{if}\ x\in\tilde\Omega_\eps,\ \tilde\varphi_\eps(x):=0\ \hbox{if}\ x\not\in\tilde\Omega_\eps \ \hbox{and}\ \tilde\lambda_\eps:={\delta_k}_\eps^2\lambda_\eps.$$
Then, it is immediate to check that $\tilde\varphi_\eps$ solves
\begin{equation}\label{lin2}
-\Delta \tilde\varphi_\eps-f'(\tilde \phi_\eps)\tilde\varphi_\eps=\tilde\lambda_\eps\tilde\varphi_\eps\ \hbox{in}\ \tilde\Omega_\eps,\
\tilde\varphi_\eps=0\ \hbox{on}\ \partial\tilde\Omega_\eps,\quad
\tilde\varphi_\eps>0\ \hbox{in}\ \tilde\Omega_\eps,\ \|\tilde \varphi_\eps\|_{L^2(\rr^n)}=1.
\end{equation}
\begin{lemma}\label{lem1}
There hold true that
\begin{itemize}
\item[(i)]
There exists $c>0$ such that $\|\tilde\varphi_\eps\|_{H^1_0(\tilde\Omega_\eps)}\le c, $
\item[(ii)] $ \int\limits_{\tilde \Omega_\eps}|f'(\tilde \phi_\eps)-f'(U)|\tilde \varphi_\eps^2dx\to0$ as $\eps\to0.$
\end{itemize}
\end{lemma}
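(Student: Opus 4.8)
The plan is to exploit that $\tilde\varphi_\eps$ is the first eigenfunction of the scaled operator $-\Delta - f'(\tilde\phi_\eps)$ with eigenvalue $\tilde\lambda_\eps = {\delta_k}_\eps^2\lambda_\eps$. For item (i), I would multiply equation \eqref{lin2} by $\tilde\varphi_\eps$ and integrate over $\tilde\Omega_\eps$, obtaining
\[
\int_{\tilde\Omega_\eps}|\nabla\tilde\varphi_\eps|^2\,dx = \tilde\lambda_\eps + \int_{\tilde\Omega_\eps} f'(\tilde\phi_\eps)\tilde\varphi_\eps^2\,dx.
\]
Since $\lambda_\eps<0$ by Lemma \ref{lem0}, the term $\tilde\lambda_\eps$ is negative, so it suffices to bound $\int_{\tilde\Omega_\eps} f'(\tilde\phi_\eps)\tilde\varphi_\eps^2\,dx$ from above. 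Here $f'(\tilde\phi_\eps) = p_S|\tilde\phi_\eps|^{p_S-1}$, and by Lemma \ref{bub1} we have $\tilde\phi_\eps = U + \rho_\eps + \tilde R_\eps$ with the two error terms small in the relevant norms; together with the boundedness of $U$ this gives $|\tilde\phi_\eps|^{p_S-1} \in L^{N/2}(\tilde\Omega_\eps)$ with a uniformly bounded norm (note $p_S-1 = 4/(N-2)$, so $(p_S-1)\cdot\frac N2 = \frac{2N}{N-2}$, exactly the exponent appearing in \eqref{resto} and \eqref{bub2}). Then by Hölder's inequality and the Sobolev embedding $H^1_0(\tilde\Omega_\eps)\hookrightarrow L^{2N/(N-2)}(\tilde\Omega_\eps)$ (with constant independent of the domain, by extension by zero to $\rr^N$),
\[
\int_{\tilde\Omega_\eps} f'(\tilde\phi_\eps)\tilde\varphi_\eps^2\,dx \le p_S\,\big\||\tilde\phi_\eps|^{p_S-1}\big\|_{L^{N/2}(\tilde\Omega_\eps)}\,\|\tilde\varphi_\eps\|_{L^{2N/(N-2)}(\tilde\Omega_\eps)}^2 \le C\,\|\nabla\tilde\varphi_\eps\|_{L^2(\tilde\Omega_\eps)}^2.
\]
This alone is not enough to close the estimate (the constant $C$ need not be $<1$), so I would instead split $f'(\tilde\phi_\eps)$ as $f'(U) + (f'(\tilde\phi_\eps)-f'(U))$: the contribution of $f'(U)$ is controlled by $\int_{\rr^N} f'(U)\tilde\varphi_\eps^2 \le (\lambda^* \text{ lower bound argument}) + \int|\nabla\tilde\varphi_\eps|^2$ using \eqref{las}, and the remaining piece I claim is $o(1)\|\nabla\tilde\varphi_\eps\|_{L^2}^2$ — which is precisely the content of item (ii) up to the extra $\tilde\varphi_\eps^2$-versus-gradient conversion. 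So the right order is to prove (ii) first and feed it into (i): from \eqref{las}, $\int f'(U)\tilde\varphi_\eps^2 \le \int|\nabla\tilde\varphi_\eps|^2 - \lambda^*\|\tilde\varphi_\eps\|_{L^2}^2$ is not directly useful, but rather $-\lambda^* \le \int|\nabla\tilde\varphi_\eps|^2 - \int f'(U)\tilde\varphi_\eps^2$; combining with the eigenvalue identity and (ii) yields $\int|\nabla\tilde\varphi_\eps|^2 = \tilde\lambda_\eps + \int f'(U)\tilde\varphi_\eps^2 + o(1)\le \int f'(U)\tilde\varphi_\eps^2 + o(1)$, and then a bootstrap using Hölder plus Sobolev on the $f'(U)$-term (whose $L^{N/2}$ norm is a fixed finite number) gives $\|\nabla\tilde\varphi_\eps\|_{L^2}^2 \le C$; the point is that the inequality $a \le Cb^{2}$ with $a = \|\nabla\tilde\varphi_\eps\|^2$, $b=\|\tilde\varphi_\eps\|_{L^{2N/(N-2)}}\le S\|\nabla\tilde\varphi_\eps\| = Sb'$ does not self-improve, so one genuinely needs the negativity $\tilde\lambda_\eps<0$ together with the concentration-compactness structure behind $\lambda^*$. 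I would therefore argue by contradiction: if $\|\nabla\tilde\varphi_\eps\|_{L^2}\to\infty$ along a subsequence, normalize $w_\eps := \tilde\varphi_\eps/\|\nabla\tilde\varphi_\eps\|_{L^2}$, pass to a weak limit in $H^1(\rr^N)$, and use that the $f'(\tilde\phi_\eps)$ term becomes negligible (by the $L^{N/2}$ smallness of the tails of $|\tilde\phi_\eps|^{p_S-1}$ away from the origin, plus local compactness near the origin) to force $\int|\nabla w_\eps|^2 \le o(1)$, contradicting $\|\nabla w_\eps\|_{L^2}=1$.

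For item (ii), write $f'(\tilde\phi_\eps) - f'(U) = p_S\big(|\tilde\phi_\eps|^{p_S-1} - |U|^{p_S-1}\big)$ and use the elementary inequality $\big||a|^{p_S-1}-|b|^{p_S-1}\big| \le C\big(|a-b|^{p_S-1} + |b|^{p_S-2}|a-b|\big)$ when $p_S\ge 2$ (i.e. $N\le 6$), or simply $\big||a|^{p_S-1}-|b|^{p_S-1}\big|\le C|a-b|^{p_S-1}$ when $p_S-1\le 1$ (i.e. $N\ge 6$); in either case with $a = \tilde\phi_\eps$, $b=U$ and $a-b = \rho_\eps+\tilde R_\eps$. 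Then Hölder's inequality against $\tilde\varphi_\eps^2$, using the uniform bound $\|\tilde\varphi_\eps\|_{L^{2N/(N-2)}} \le S\|\nabla\tilde\varphi_\eps\|_{L^2}$ — which at this stage of the bootstrap is either assumed bounded (circular) or carried as the quantity being controlled — reduces everything to showing $\|\rho_\eps + \tilde R_\eps\|_{L^{r}(\tilde\Omega_\eps)}\to 0$ for the exponents $r$ dictated by Hölder, namely $r = 2N/(N-2)$ for the $L^{N/2}$-pairing against $|{\cdot}|^{p_S-1}$-type terms, and a larger $r>2N/(N-2)$ for the mixed term involving $|U|^{p_S-2}|a-b|$ (here is where the \emph{stronger} bound $\|\rho_\eps\|_{L^q}\to 0$ for all $q>2N/(N-2)$ in \eqref{bub2} is used, compensated by $|U|^{p_S-2}\in L^s$ for $s$ slightly larger than $N/2$ since $U$ decays like $|x|^{-(N-2)}$). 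To break the circularity I would run (i) and (ii) together as a single contradiction argument: assume $M_\eps := \|\nabla\tilde\varphi_\eps\|_{L^2}\to\infty$, divide through, and observe the cross term $\int|f'(\tilde\phi_\eps)-f'(U)|w_\eps^2 \le \|f'(\tilde\phi_\eps)-f'(U)\|_{L^{N/2}}\,\|w_\eps\|_{L^{2N/(N-2)}}^2 = o(1)$ since the first factor $\to 0$ by the norm estimates on $\rho_\eps,\tilde R_\eps$ above and the second is bounded by $S^2$; this kills the obstruction and yields the bound in (i), after which (ii) follows directly from the now-established boundedness of $\|\tilde\varphi_\eps\|_{L^{2N/(N-2)}}$ together with $\|f'(\tilde\phi_\eps)-f'(U)\|_{L^{N/2}(\tilde\Omega_\eps)}\to 0$.

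The main obstacle is the apparent circularity between (i) and (ii): the uniform $H^1$ bound is what makes the Hölder estimate in (ii) meaningful, yet (ii) (in the guise of the smallness of the perturbed potential) is what one wants to use to prove (i). The clean way out is the contradiction/rescaling argument sketched above, which only requires the one-sided information $\tilde\lambda_\eps<0$ (Lemma \ref{lem0}) and the norm-smallness of $\rho_\eps$ and $\tilde R_\eps$ (Lemma \ref{bub1}), both already available. A secondary technical point is the case distinction $p_S\ge 2$ versus $p_S<2$ (equivalently $N\le 6$ versus $N\ge 6$) in handling $\big||\tilde\phi_\eps|^{p_S-1}-|U|^{p_S-1}\big|$; this is routine but must be done, and in the harder regime $N\le 6$ one needs the mixed term $|U|^{p_S-2}|\rho_\eps+\tilde R_\eps|$, for which the extra integrability $\|\rho_\eps\|_{L^q}\to 0$, $q>2N/(N-2)$, from \eqref{bub2} is exactly what is required.
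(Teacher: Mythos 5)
There is a genuine gap, and it sits at the heart of both halves of your plan: the claim that $\|f'(\tilde \phi_\eps)-f'(U)\|_{L^{N/2}(\tilde\Omega_\eps)}\to 0$ (equivalently, up to the mixed term, that $\|\tilde\phi_\eps-U\|_{L^{2N/(N-2)}(\tilde\Omega_\eps)}\to 0$) is false. By Lemma \ref{bub1}, $\tilde\phi_\eps-U=\rho_\eps+\tilde R_\eps$, and while $\|\tilde R_\eps\|_{L^{2N/(N-2)}}\to0$, the term $\rho_\eps$ consists of the other $k-1$ rescaled bubbles, $\rho_\eps=\sum_{i<k}\pm U_{{\delta_i}_\eps/{\delta_k}_\eps,\,({\xi_i}_\eps-{\xi_k}_\eps)/{\delta_k}_\eps}$, and the $L^{2N/(N-2)}$ norm of a bubble is scale invariant; its restriction to $\tilde\Omega_\eps$ stays bounded away from zero. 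This is exactly why \eqref{bub2} asserts smallness of $\|\rho_\eps\|_{L^q}$ only for $q>\frac{2N}{N-2}$ (in the computation in the proof of Lemma \ref{bub1} the gain is $({\delta_k/\delta_i})^{\frac{N-2}{2}q-N}$, which vanishes only for $q$ strictly above the critical exponent). Consequently the Hölder pairing $\|f'(\tilde\phi_\eps)-f'(U)\|_{L^{N/2}}\,\|\cdot\|_{L^{2N/(N-2)}}^2$ does not produce an $o(1)$ coefficient, so the contradiction/rescaling argument you propose for (i) does not force $\int|\nabla w_\eps|^2\le o(1)$, and your final deduction of (ii) ("follows directly from ... $\|f'(\tilde\phi_\eps)-f'(U)\|_{L^{N/2}}\to0$") collapses at the $|\tilde\phi_\eps-U|^{p_S-1}$-type term, precisely the term for which only the supercritical-exponent smallness is available. (Your treatment of the mixed term $|U|^{p_S-2}|\rho_\eps+\tilde R_\eps|$ is fine; it is the pure power term that breaks.)

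The circularity you worry about is in fact not there, and the fix is more elementary than concentration--compactness. The paper bounds the main term by $\int_{\tilde\Omega_\eps} f'(U)\tilde\varphi_\eps^2\le\|f'(U)\|_{L^\infty(\mathbb R^N)}\|\tilde\varphi_\eps\|_{L^2}^2\le c$, using the normalization $\|\tilde\varphi_\eps\|_{L^2}=1$ and the boundedness of $U$ --- a step your plan does not contain (you only consider the $L^{N/2}$ pairing of $f'(U)$, which gives $C\|\nabla\tilde\varphi_\eps\|^2$ and indeed does not close). For the difference term it applies Hölder with an exponent $t>N/2$, so that the $\rho_\eps$ factors appear in norms $L^{(p_S-1)t}$ with $(p_S-1)t>\frac{2N}{N-2}$, where \eqref{bub2} does give smallness, and the companion factor $\|\tilde\varphi_\eps\|_{L^{2t/(t-1)}}^2$ is interpolated between $L^2$ (norm $1$) and $L^{2N/(N-2)}$, yielding $\|\tilde\varphi_\eps\|_{H^1_0}^{N/t}$ with $N/t<2$. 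The resulting inequality $(1-\alpha(\eps))\|\tilde\varphi_\eps\|_{H^1_0}^2\le c+\beta(\eps)\|\tilde\varphi_\eps\|_{H^1_0}^{N/t}$ is self-improving because the right-hand power is strictly subquadratic, which gives (i) directly; (ii) then follows from the same estimates. If you want to salvage your scheme, you must replace every $L^{N/2}$--$L^{2N/(N-2)}$ pairing involving $\rho_\eps$ by this $t>N/2$ pairing plus interpolation against the $L^2$ normalization.
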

\begin{proof}
We have
\begin{align}\label{equ1}
\int\limits_{\tilde\Omega_\eps}|\nabla\tilde\varphi_\eps(x)|^2dx& =
\int\limits_{ \tilde \Omega_\eps}f'(\tilde \phi_\eps)\tilde \varphi_\eps^2dx+\tilde \lambda_\eps  \int\limits_{\tilde \Omega_\eps} \tilde \varphi_\eps^2dx \
 \hbox{(we use that $\tilde \lambda_\eps<0$ because of Lemma \ref{lem0})}\nonumber\\
&\le
\int\limits_{\tilde \Omega_\eps}f'(\tilde \phi_\eps)\tilde \varphi_\eps^2dx =\int\limits_{\tilde \Omega_\eps}f'(U)\tilde \varphi_\eps^2dx
+\int\limits_{\tilde \Omega_\eps}\(f'(\tilde \phi_\eps)-f'(U)\)\tilde \varphi_\eps^2dx
\end{align}
Now,
\begin{equation}\label{equ2}\int\limits_{\tilde \Omega_\eps}f'(U)\tilde \varphi_\eps^2dx\le \|f'(U)\|_{L^\infty(\rr^N)}\|\tilde\varphi_\eps\|_{L^2(\rr^N)}\le c
\end{equation}
for some positive constant $c.$
Moreover it is easy to check that
\begin{equation}\label{yyl}
\left||a+b|^{\alpha-1}(a+b)-|a|^{\alpha-1}a\right|\le \left\{\begin{aligned} &c \min \left\{|a|^{{\alpha}-1}|b|,|b|^{{\alpha}}\right\} \  \forall\ a,b\in\rr,\ \hbox{if}\ 0<{\alpha}\le 1,\\
&c \(|a|^{{\alpha}-1}|b|+|b|^{{\alpha}}\) \  \forall\ a,b\in\rr,\ \hbox{if}\ {\alpha}>1,
\end{aligned}\right.
\end{equation}
where $c$ is a positive constant only depending on ${\alpha}.$

Hence by \eqref{yyl} with ${\alpha}=p_S-1$ we get
$$ |f'(\tilde \phi_\eps)-f'(U)|
\le \left\{\begin{aligned} &c  |\tilde \phi_\eps-U|^{p_S-1} \   \hbox{ if }\ 1<p_S\le2,\\
&c \(|U|^{p_S-2}|\tilde \phi_\eps-U|+|\tilde \phi_\eps-U|^{p_S-1}\) \   \hbox{ if }\ p_S>2,
\end{aligned}\right.$$
Then if $p_S\le2$, i.e. $N\ge6$, we get
\begin{align}\label{equ3}
 \int\limits_{\tilde \Omega_\eps}|f'(\tilde \phi_\eps)-f'(U)|\tilde \varphi_\eps^2dx&\le c \int\limits_{\tilde \Omega_\eps}|\tilde \phi_\eps-U|^{p_S-1}\tilde \varphi_\eps^2dx
 \ \hbox{(we use Lemma \ref{bub1})}
 \nonumber\\ &
\le c \int\limits_{\tilde \Omega_\eps}\(|\rho_\eps |^{p_S-1}+|\tilde R_\eps|^{p_S-1}\) \tilde \varphi_\eps^2dx\ \hbox{(by H\"older's inequality with $t>N/2$)}\nonumber \\ &
\le c  \|\rho_\eps \|^{p_S-1}_{L^{(p_S-1)t}(\tilde\Omega_\eps)}\|\tilde \varphi_\eps \|^{2}_{L^{2t\over t-1}(\tilde\Omega_\eps)}
+c\|\tilde R_\eps \|^{p_S-1}_{L^{2N\over N-2}(\tilde\Omega_\eps)}\|\tilde \varphi_\eps \|^{2}_{L^{2N\over N-2}(\tilde\Omega_\eps)}
\end{align}
and if $p_S>2,$ i.e. $N=3,4,5$, we get
\begin{align}\label{equ4}
 \int\limits_{\tilde \Omega_\eps}|f'(\tilde \phi_\eps)-f'(U)|\tilde \varphi_\eps^2dx&\le c \int\limits_{\tilde \Omega_\eps}\(|U|^{p_S-2}|\tilde \phi_\eps-U|+|\tilde \phi_\eps-U|^{p_S-1}\) \tilde \varphi_\eps^2dx
 \ \hbox{(we use Lemma \ref{bub1})}\nonumber\\ &
\le c \int\limits_{\tilde \Omega_\eps}\left[|U|^{p_S-2}\(|\rho_\eps|+|\tilde R_\eps |\)+|\rho_\eps |^{p_S-1}+|\tilde R_\eps|^{p_S-1}\right] \tilde \varphi_\eps^2dx\nonumber\\
&\qquad \hbox{(we apply H\"older's inequality with   $t>N/2$)} \nonumber \\ &
\le c  \|U \|^{p_S-2}_{L^{\infty}(\rr^N)}\|\rho_\eps \|_{L^{t}(\tilde\Omega_\eps)}\|\tilde \varphi_\eps \|^{2}_{L^{2t\over t-1}(\tilde\Omega_\eps)}
\nonumber\\ &\quad
+c\|U \|^{p_S-2}_{L^{2N\over 6-N}(\tilde\Omega_\eps)}\|\tilde R_\eps \| _{L^{2N\over N-2}(\tilde\Omega_\eps)}
\|\tilde \varphi_\eps \|^{2}_{L^{2N\over N-2}(\tilde\Omega_\eps)}\nonumber\\ &\quad +
 c  \|\rho_\eps \|^{p_S-1}_{L^{(p_S-1)t}(\tilde\Omega_\eps)}\|\tilde \varphi_\eps \|^{2}_{L^{2t\over t-1}(\tilde\Omega_\eps)}
+c\|\tilde R_\eps \|^{p_S-1}_{L^{2N\over N-2}(\tilde\Omega_\eps)}\|\tilde \varphi_\eps \|^{2}_{L^{2N\over N-2}(\tilde\Omega_\eps)}
\end{align}
Now, we remark that
$${2t\over t-1}=2\theta+{2N\over N-2}(1-\theta)\quad \hbox{with}\ \theta:={2t-N\over2(t-1)}\in(0,1)$$
and so by interpolation
\begin{equation}\label{equ5}\|\tilde \varphi_\eps \|_{L^{2t\over t-1}(\tilde\Omega_\eps)}\le \|\tilde \varphi_\eps \|^{{ \theta(t-1)\over t}}_{L^{2 }(\tilde\Omega_\eps)}\
 \|\tilde \varphi_\eps \|^{N\over 2t}_{L^{2N\over N-2}(\tilde\Omega_\eps)}\le \|\tilde \varphi_\eps \|^{{ N\over 2t} }_{L^{2N\over N-2}(\tilde\Omega_\eps)}
 \end{equation}
 because $ \|\tilde \varphi_\eps \|_ {L^{2 }(\tilde\Omega_\eps)}\le1.$
 Finally, we collect \eqref{equ1}--\eqref{equ5}, we use Sobolev's inequality
 $\|\tilde \varphi_\eps \| _{L^{2N\over N-2 }(\tilde\Omega_\eps)}       \le c\|\tilde \varphi_\eps \| _{H^1_0(\tilde\Omega_\eps)}$, we also
 estimate \eqref{bub2} and we get
 $$(1-\alpha(\eps))\|\tilde \varphi_\eps \|^2_{H^1_0(\tilde\Omega_\eps)}\le c+\beta(\eps) \|\tilde \varphi_\eps \|^{{ N\over  t} }_{H^1_0(\tilde\Omega_\eps)}$$
 where $\alpha(\eps),\beta(\eps)\to0$ as $\eps\to0.$ Therefore (i) follows, because  $t>N/2$.\\
 By (i), by estimates \eqref{equ3}, \eqref{equ4} and \eqref{equ5} and by \eqref{bub2} we immediately get (ii).
\end{proof}

\begin{lemma}\label{lem2}
It holds true that
\begin{itemize}
\item[(i)] $\lim\limits_{\eps\to0}\tilde\lambda_\eps=\lambda^* $
\item[(ii)] $ \varphi_\eps $ strongly converges to $\varphi^*$ in $L^2(\rr^N)$ as $\eps\to0.$
\end{itemize}
\end{lemma}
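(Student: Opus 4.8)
The plan is to trap $\tilde\lambda_\eps$ between $\lambda^*+o(1)$ from below and from above, which yields (i), and then to recognise the rescaled eigenfunction $\tilde\varphi_\eps$ (extended by $0$ outside $\tilde\Omega_\eps$, as in \eqref{lin2}) as a minimizing sequence for \eqref{las} and invoke Lemma \ref{3.3}(iii) to get (ii). First I would record the geometry of the rescaled domain. Setting $r_\eps:=\eps/{\delta_k}_\eps=\eps^{1/2k}/{d_k}_\eps\to0$ by \eqref{de}, one reads off from \eqref{xi} that the rescaled hole $({B_\eps(x_0)-{\xi_k}_\eps})/{\delta_k}_\eps$ is exactly the ball $B_{r_\eps}(-{\tau_k}_\eps)$, while the rescaled outer region $({\Omega-{\xi_k}_\eps})/{\delta_k}_\eps$ invades all of $\rr^N$ because $x_0$ is interior to $\Omega$. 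Consequently every compact subset of $\rr^N\setminus\{-\tau_k\}$ lies in $\tilde\Omega_\eps$ for $\eps$ small, and $\mathbf 1_{\tilde\Omega_\eps}\to1$ a.e. in $\rr^N$.

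For the upper bound, fix $\eta>0$. Since a point has zero $H^1$-capacity in $\rr^N$ (recall $N\ge3$), the space $C_c^\infty(\rr^N\setminus\{-\tau_k\})$ is dense in $H^1(\rr^N)$, so one may choose $w\in C_c^\infty(\rr^N\setminus\{-\tau_k\})$ with $\|w\|_{L^2(\rr^N)}=1$ and $\int_{\rr^N}\big(|\nabla w|^2-f'(U)w^2\big)<\lambda^*+\eta$. For $\eps$ small one has $\mathrm{supp}\,w\subset\tilde\Omega_\eps$ (by the previous paragraph), hence $w$ is admissible in the Rayleigh quotient defining $\tilde\lambda_\eps$ and
\[\tilde\lambda_\eps\le\int_{\rr^N}\big(|\nabla w|^2-f'(\tilde\phi_\eps)w^2\big)=\int_{\rr^N}\big(|\nabla w|^2-f'(U)w^2\big)+\int_{\mathrm{supp}\,w}\big(f'(U)-f'(\tilde\phi_\eps)\big)w^2.\]
The last integral is $o(1)$: on the fixed compact set $\mathrm{supp}\,w$ the function $U$ is bounded, Lemma \ref{bub1} gives $\tilde\phi_\eps=U+\rho_\eps+\tilde R_\eps$, and \eqref{yyl} with $\alpha=p_S-1$ bounds $|f'(\tilde\phi_\eps)-f'(U)|$ by a sum of terms each of which, by \eqref{bub2} and Hölder's inequality, has $L^1(\mathrm{supp}\,w)$ norm tending to $0$ — this is the same computation as in the proof of Lemma \ref{lem1}(ii) with the fixed $w$ in place of $\tilde\varphi_\eps$. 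Letting $\eta\to0$, $\limsup_\eps\tilde\lambda_\eps\le\lambda^*$.

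For the lower bound I would note that the zero extension of $\tilde\varphi_\eps$ lies in $H^1(\rr^N)$ with $\|\tilde\varphi_\eps\|_{L^2(\rr^N)}=1$, test \eqref{lin2} against $\tilde\varphi_\eps$, and use Lemma \ref{lem1}(ii):
\[\tilde\lambda_\eps=\int_{\tilde\Omega_\eps}\big(|\nabla\tilde\varphi_\eps|^2-f'(\tilde\phi_\eps)\tilde\varphi_\eps^2\big)=\int_{\rr^N}\big(|\nabla\tilde\varphi_\eps|^2-f'(U)\tilde\varphi_\eps^2\big)+o(1)\ge\lambda^*+o(1),\]
the last inequality by the definition \eqref{las} of $\lambda^*$. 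Combined with the upper bound this gives $\tilde\lambda_\eps\to\lambda^*$, proving (i). Moreover the two displays show $\int_{\rr^N}(|\nabla\tilde\varphi_\eps|^2-f'(U)\tilde\varphi_\eps^2)=\tilde\lambda_\eps+o(1)\to\lambda^*$ with $\|\tilde\varphi_\eps\|_{L^2(\rr^N)}=1$, i.e. $(\tilde\varphi_\eps)$ is a minimizing sequence for \eqref{las}. By Lemma \ref{3.3}(iii) a subsequence converges strongly in $L^2(\rr^N)$ to some $\bar\varphi$; by Lemma \ref{lem1}(i), along a further subsequence, $\tilde\varphi_\eps\rightharpoonup\bar\varphi$ in $H^1(\rr^N)$. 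Then $\bar\varphi\ge0$, $\|\bar\varphi\|_{L^2(\rr^N)}=1$, and weak lower semicontinuity of the Dirichlet integral together with $f'(U)\in L^\infty(\rr^N)$ and strong $L^2$ convergence give $\int_{\rr^N}(|\nabla\bar\varphi|^2-f'(U)\bar\varphi^2)\le\lambda^*$, so $\bar\varphi$ is a nonnegative minimizer of \eqref{las}, whence $\bar\varphi=\varphi^*$ by the uniqueness in Lemma \ref{3.3}(ii). As the limit does not depend on the chosen subsequence, $\tilde\varphi_\eps\to\varphi^*$ strongly in $L^2(\rr^N)$, which is (ii). The only mildly delicate point is the rescaled bookkeeping — checking that the hole shrinks (so it is invisible to $H^1$ test functions) and re-running the perturbation estimate of Lemma \ref{lem1}(ii) with $w$ instead of $\tilde\varphi_\eps$; both are routine given Lemmas \ref{bub1} and \ref{lem1}.
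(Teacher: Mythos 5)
Your proof is correct, and its overall skeleton coincides with the paper's: the lower bound $\tilde\lambda_\eps\ge\lambda^*+o(1)$ is obtained exactly as in the paper (test the zero-extension of $\tilde\varphi_\eps$ in \eqref{las} and absorb the error via Lemma \ref{lem1}(ii), which is display \eqref{equ7} there), and part (ii) is the same minimizing-sequence argument feeding into Lemma \ref{3.3}. The genuine difference is in the upper bound $\limsup_\eps\tilde\lambda_\eps\le\lambda^*$. The paper builds an explicit competitor: it multiplies $\varphi^*\bigl((x-{\xi_k}_\eps)/{\delta_k}_\eps\bigr)$ by a cut-off $\chi_\eps$ vanishing near the hole (scale $\eps$) and near $\partial\Omega$ (scale $\mathrm{diam}\,\Omega$), normalizes, verifies $\tilde w_\eps\to\varphi^*$ in $H^1(\rr^N)$ (their \eqref{equ6}), and then runs the same perturbation estimate. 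You instead exploit that the rescaled hole shrinks to the point $-\tau_k$, which has zero $H^1$-capacity for $N\ge3$, so fixed smooth near-minimizers with compact support in $\rr^N\setminus\{-\tau_k\}$ become admissible in the Rayleigh quotient for $\tilde\lambda_\eps$ once $\eps$ is small. Your route buys a simpler error estimate (the test function is fixed, bounded and compactly supported, so the Lemma \ref{lem1}(ii)-type computation is easier) and avoids checking \eqref{equ6}, at the price of invoking the capacity/density fact and an extra $\eta$-approximation; the paper's route is more hands-on and self-contained. Two small points worth making explicit if you write this up: (a) you use that $\tilde\lambda_\eps$ is the first Dirichlet eigenvalue of $-\Delta-f'(\tilde\phi_\eps)$ on $\tilde\Omega_\eps$, which follows from the scaling of the variational characterization (the paper does this change of variables explicitly); (b) in (ii), Lemma \ref{3.3}(ii) gives uniqueness among \emph{positive} minimizers, so to conclude $\bar\varphi=\varphi^*$ from $\bar\varphi\ge0$ you should add that a nonnegative minimizer solves the Euler--Lagrange equation and is positive by the strong maximum principle — a one-line remark, not a gap; your identification of the limit actually supplies detail the paper leaves implicit.
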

\begin{proof}
Let us prove (i). By the definiton of $\lambda^*$ and by \eqref{lin2}, we get
\begin{align}\label{equ7}
\lambda^*&\le \int\limits_{\rr^N}\(|\nabla\tilde\varphi_\eps|^2-f'(U)\tilde\varphi_\eps^2\)dx=\int\limits_{\tilde\Omega_\eps}
\(|\nabla\tilde\varphi_\eps|^2-f'(U)\tilde\varphi_\eps^2\)dx\nonumber\\ &=
\int\limits_{\tilde\Omega_\eps}\(|\nabla\tilde\varphi_\eps|^2-f'(\tilde \phi_\eps)\tilde\varphi_\eps^2\)dx+\int\limits_{\tilde\Omega_\eps}\( f'(\tilde \phi_\eps)-f'(U)\)
\tilde\varphi_\eps^2 dx\nonumber\\
&=\tilde\lambda_\eps+\alpha(\eps),\ \hbox{where   $\alpha(\eps)\to0$ as $\eps\to0$, }
\end{align}
because of (ii) of Lemma \ref{lem1}. \\
On the other hand, let us consider a regular cut-off function $\chi_\eps(x)=\chi_\eps(|x-x_0|)$ such that $0\le \chi_\eps\le 1 $  and
 $$\chi_\eps(r)=1\ \hbox{if}\ 4\eps\le r\le {{\hbox{diam}  (\Omega)}\over4}\quad \hbox{and}\ \chi_\eps(|x|)=0\ \hbox{if}\ r\le 2\eps\ \hbox{or}\  r\ge {{\hbox{diam}  (\Omega})\over2}$$
  Let us consider the functions
$$w_\eps(x):={\chi_\eps(x)\varphi^*\({(x-{\xi_k}_\eps)/{\delta_k}_\eps} \)\over\|\chi_\eps(x)\varphi^*((x-{\xi_k}_\eps)/{\delta_k}_\eps )\|_{L^2(\Omega_\eps)}},\
x\in\Omega_\eps\ \hbox{and}\
\tilde w_\eps(y):={\chi_\eps({\delta_k}_\eps y+{\xi_k}_\eps)\varphi^*(y )\over\|\chi_\eps({\delta_k}_\eps y+{\xi_k}_\eps)\varphi^*(y )\|_{L^2(\tilde \Omega_\eps)}},\ y\in\tilde\Omega_\eps.$$
It is easy to check that
\begin{equation}
\label{equ6}
\tilde w_\eps\to\varphi^*\ \hbox{in}\ H^1(\rr^N)\ \hbox{as}\ \eps\to0.\end{equation}
By the definition of $\lambda_\eps$ and scaling $x= {\delta_k}_\eps y+{\xi_k}_{\eps}$ we get
$$\lambda_\eps\le \int\limits_{\Omega_\eps}\(|\nabla w_\eps|^2-f'(\phi_\eps)w^2_\eps\)dx={1\over{\delta_k}_\eps^2}
\int\limits_{\tilde \Omega_\eps}\(|\nabla \tilde w_\eps|^2-f'(\tilde \phi_\eps)\tilde w^2_\eps\)dy,$$
which implies
\begin{align}\label{equ8}\tilde \lambda_\eps&\le
\int\limits_{\tilde \Omega_\eps}\(|\nabla \tilde w_\eps|^2-f'(\tilde \phi_\eps)\tilde w^2_\eps\)dy=
\int\limits_{\rr^N}\(|\nabla \tilde w_\eps|^2-f'(\tilde \phi_\eps)\tilde w^2_\eps\)dy\nonumber\\
&=\int\limits_{\rr^N}\(|\nabla \tilde w_\eps|^2-f'(U)\tilde w^2_\eps\)dy+\int\limits_{\rr^n}\(f'(U)-f'(\tilde \phi_\eps)\)\tilde w^2_\eps dy\nonumber\\
&= \lambda^*+\beta(\eps),\ \hbox{where   $\beta(\eps)\to0$ as $\eps\to0$, }
\end{align}
because by \eqref{equ6} we deduce that
$$\int\limits_{\rr^N}\(|\nabla \tilde w_\eps|^2-f'(U)\tilde w^2_\eps\)dy\to \int\limits_{\rr^N}\(|\nabla \varphi^*|^2-f'(U){\varphi^*}^2\)dy=\lambda^* \ \hbox{as}\ \eps\to0$$
and arguing exactly as in the proof of (ii) of Lemma \ref{lem1} we get
$$\int\limits_{\rr^n}\(f'(U)-f'(\tilde \phi_\eps)\)\tilde w^2_\eps dy\to 0\ \hbox{as}\ \eps\to0.$$
Finally, by \eqref{equ7} and \eqref{equ8} the claim follows.\\
\\
Next we prove (ii).
By the definition of $\tilde\lambda_\eps$ and (i) we have
$$\int\limits_{\widetilde \Omega_\eps}\(|\nabla\tilde\varphi_\eps\|^2-f'(\tilde\phi_\eps)\tilde\varphi_\eps^2\)dy=\tilde\lambda_\eps\to\lambda^*\ \hbox{as}\ \eps\to0,$$
which implies that $\tilde\varphi_\eps$ is a minimizing sequence for \eqref{las} and so the claim follows by Lemma \ref{3.3}.
\end{proof}

\

\begin{proof}[Proof of Proposition \ref{towerVerificaCondizioneIntegraleCasoGEN}]

We now prove that
$$\liminf\limits_{\eps\to0}\int\limits_{\Omega_\eps}\phi_\eps\varphi_\eps dx>0.$$

We multiply equation \eqref{problem} by $\varphi_\eps$ and equation \eqref{lin1} by $\phi_\eps$,
we subtract the two equations and we get
\begin{equation}\label{eq1}\int\limits_{\Omega_\eps}\phi_\eps\varphi_\eps dx=-{p_S-1\over\lambda _\eps}\int\limits_{\Omega_\eps}f(\phi_\eps)\varphi_\eps dx.\end{equation}
Therefore, we are lead to study the sign of the right hand side of \eqref{eq1}. We are going to prove that
 \begin{equation}\label{eq2}\lim\limits_{\eps\to0}{\delta_k}_\eps\int\limits_{\Omega_\eps}f(\phi_\eps)\varphi_\eps dx=
 \int\limits_{\rr^N}f(U)\varphi^* dx.
 \end{equation}
Since the right hand side of \eqref{eq2} is positive, this will imply that the right hand side of \eqref{eq1} is positive and finally the claim will follow.
\\
Let us prove \eqref{eq2}.
We have
\begin{align}\label{eq3}
  {\delta_k}_\eps\int\limits_{\Omega_\eps}f(\phi_\eps)\varphi_\eps dx- \int\limits_{\rr^N}f(U)\varphi^* dx&=
 \int\limits_{\tilde \Omega_\eps}f(\tilde \phi_\eps)\tilde\varphi_\eps dx- \int\limits_{\rr^N}f(U)\varphi^* dx\nonumber\\
 &= \int\limits_{\tilde \Omega_\eps}\left[f(\tilde \phi_\eps)-f(U)\right]\tilde \varphi_\eps dx +  \int\limits_{\rr^N}f(U)\left[\tilde\varphi_\eps-\varphi^*\right] dx .
\end{align}
By H\"older's inequality we get
$$\left|\int\limits_{\rr^N}f(U)\left[\tilde\varphi_\eps-\varphi^*\right] dx\right|\le \left\|f(U)\right\|_{L^2(\rr^N)}  \left\|\tilde\varphi_\eps-\varphi^*\right\|_{L^2(\rr^N)}\to 0\ \hbox{as}\ \eps\to0$$
because of (ii) of Lemma \ref{lem2}.
Moreover, by \eqref{yyl} with ${\alpha}=p_S$   and using H\"older's inequality we get (here we choose $q>{2N\over N-2}$ with  $q(N-8)<2N$)
\begin{align*}
   \left| \int\limits_{\tilde \Omega_\eps}\left[f(\tilde \phi_\eps)-f(U)\right]\tilde \varphi_\eps dx\right|&\le  c
  \int\limits_{\tilde \Omega_\eps}  \( f'(U) |\tilde \phi_\eps-U|+|\tilde \phi_\eps-U|^{p_S}\)|\tilde \varphi_\eps| dx \\&\leq  c
  \int\limits_{\tilde \Omega_\eps}  \left[ U^{4\over N-2 }\( |\rho_\eps|+|\tilde R_\eps |\)+\(|\rho_\eps|^{N+2\over N-2}+|\tilde R_\eps |^{N+2\over N-2}\)\right]|\tilde \varphi_\eps| dx \\
  &\le c \left\| U \right\|^{4\over N-2}_{L^{8q\over (q-2)(N-2)}(\rr^N)}\left\| \rho_\eps \right\|_{L^{q}(\tilde \Omega_\eps)}
  \left\| \tilde \varphi_\eps \right\|_{L^{2}(\tilde \Omega_\eps)}\\ &\quad+c \left\| U \right\|^{4\over N-2}_{L^{2N\over N-2}(\rr^N)}\left\| \tilde R_\eps \right\|_{L^{2N\over N-2}(\tilde \Omega_\eps)}
  \left\| \tilde \varphi_\eps \right\|_{L^{2N\over N-2}(\tilde \Omega_\eps)}
  \\ &\quad+c\left\| \rho_\eps \right\|^{\frac{N+2}{N-2}}_{L^{2(N+2)\over N-2}(\tilde \Omega_\eps)}
  \left\| \tilde \varphi_\eps \right\|_{L^ {2}(\tilde \Omega_\eps)} +c\left\| \tilde R_\eps \right\|^{N+2\over N-2}_{L^{2N\over N-2}(\tilde \Omega_\eps)}
  \left\| \tilde \varphi_\eps \right\|_{L^ {2N\over N-2}(\tilde \Omega_\eps)} \\ &\quad\hbox{(we apply Lemma \ref{lem1} and estimate \eqref{bub2}}\\ &\le \alpha(\eps)
\end{align*}
 where $\alpha(\eps) \to0$ as $\eps\to0 $. Therefore the claim follows.
\end{proof}

\

\section{Proof of Theorem \ref{teorema:casoRadiale}}
The proof of Theorem \ref{teorema:casoRadiale} consists in
finding a radial sign changing stationary solution to \eqref{problem} which satisfies \eqref{condizioneProdottoScalareDiversoDaZeroGen}.
Then Proposition \ref{proposition:generaleCazenaveDickWeiss} applies and the conclusion follows from the last statement of Proposition \ref{proposition:comparison}.

As the radial sign changing stationary solution we take now a radial $k$-tower solution, which exists in the annulus with a sufficiently small hole, as next result asserts:

\begin{lemma}[Existence of radial $k$-tower stationary solutions]
\label{teo1}\label{existenceOfRadialTowers}
Let $\Omega_{\epsilon}:=\{x\in\mathbb R^N: 0<\epsilon<|x|<1\}$,  $N\in\mathbb N$, $N\geq 3$ and $\epsilon >0$ small.
For any integer $k\ge {2}$ there exists $\eps_k>0$ such that for any $\eps\in(0,\eps_k)$ problem \eqref{problem} has  a radial $k$-tower  {sign-changing} stationary solution $\widehat{\phi_\eps}$ whose profile is
\begin{equation}\label{Radpro}
\widehat{\phi_\eps}(x)=\sum\limits_{i=1}^k(-1)^{i}  U_{{\delta_i}_\eps,0}(x)+\widehat{R_\eps}(x),\ x\in\Omega_\eps
\end{equation}
where the concentration parameters ${\delta_i}_\eps$'s satisfy
\begin{equation}\label{radde}
{\delta_i}_\eps:={\widehat{d_i}}_\eps\eps^{2i-1\over 2k},\ {\widehat{d_i}}_\eps\in\mathbb R\quad \hbox{and}\quad {\widehat{d_i}}_\eps\to \widehat{d_i}>0\ \hbox{as}\ \eps\to0\quad
\hbox{for}\  i=1,\dots,k,
\end{equation}
and the remainder term $\widehat{R_\eps}$ is radial and satisfies
\begin{equation}\label{resto}
\|\widehat{R_\eps}\|_{L^{2N\over N-2}(\Omega_\eps)}\to 0\ \ \hbox{as}\ \eps\to0.
\end{equation}
\end{lemma}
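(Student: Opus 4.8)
The plan is to repeat the proof of Lemma~\ref{existenceOfTowers} almost word for word, replacing the general–domain construction of \cite{gmp} by its radial counterpart in \cite{MussoPistoia}. Recall from \cite{MussoPistoia} that for every integer $k\ge1$ there is $\eps_k>0$ such that for $\eps\in(0,\eps_k)$ problem \eqref{problem} on the annulus $\Omega_\eps=\{0<\eps<|x|<1\}$ admits a \emph{radial} stationary solution of the form
\[
\widehat{\phi_\eps}(x)=\sum_{i=1}^k(-1)^i P_{\Omega_\eps}U_{{\delta_i}_\eps,0}(x)+\widehat{\psi_\eps}(x),\qquad x\in\Omega_\eps,
\]
with concentration parameters ${\delta_i}_\eps$ satisfying \eqref{radde}; here all the concentration points coincide with the centre $x_0=0$ of the hole (that is, the parameters $\tau_i$ in \eqref{xi} are identically zero), and the remainder obeys $\|\widehat{\psi_\eps}\|_{H^1_0(\Omega_\eps)}\to0$ as $\eps\to0$. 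Since both $\Omega_\eps$ and the approximate solution $\sum_i(-1)^iP_{\Omega_\eps}U_{{\delta_i}_\eps,0}$ are invariant under rotations about the origin, the Lyapunov--Schmidt reduction producing $\widehat{\psi_\eps}$ may be carried out in the subspace of radial functions, so $\widehat{\psi_\eps}$ is itself radial.

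Next I would feed in the expansion \eqref{pq3}--\eqref{pq4} of the projected bubble with $\Omega=B_1(0)$ and $x_0=0$. The regular part $H(\cdot,0)$ of the Green function of the unit ball is bounded (in fact constant), so, exactly as in \eqref{pq5}--\eqref{pq7}, one obtains
\[
P_{\Omega_\eps}U_{{\delta_i}_\eps,0}(x)=U_{{\delta_i}_\eps,0}(x)+\widehat{R_i}_\eps(x),\qquad \|\widehat{R_i}_\eps\|_{L^{2N/(N-2)}(\Omega_\eps)}\to0\ \text{ as }\eps\to0,
\]
the $\widehat{R_i}_\eps$ being again radial, and the decay following from \eqref{radde} since $\eps/{\delta_i}_\eps\to0$ for $i\le k$. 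Setting $\widehat{R_\eps}:=\sum_{i=1}^k\widehat{R_i}_\eps+\widehat{\psi_\eps}$, which is radial and whose $L^{2N/(N-2)}(\Omega_\eps)$ norm tends to $0$, yields the profile \eqref{Radpro} and the remainder estimate \eqref{resto}.

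Finally, for the sign-changing assertion: because $k\ge2$ and, by \eqref{radde}, ${\delta_{i+1}}_\eps/{\delta_i}_\eps\to0$, the bubbles in \eqref{Radpro} live at mutually separated scales. Rescaling \eqref{Radpro}, that is looking at ${\delta_j}_\eps^{(N-2)/2}\widehat{\phi_\eps}({\delta_j}_\eps\,\cdot\,)$, and using that the critical $L^{2N/(N-2)}$ norm is scale invariant together with interior elliptic estimates for the equation $-\Delta v=|v|^{4/(N-2)}v$, one checks that this rescaling converges, locally uniformly on $\rr^N\setminus\{0\}$, to $(-1)^jU$; hence for $\eps$ small $\widehat{\phi_\eps}$ is positive on $\{|x|={\delta_j}_\eps\}$ for $j$ even and negative for $j$ odd, so it genuinely changes sign (this is, of course, already part of what is proved in \cite{MussoPistoia}). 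There is no deep obstacle here: the whole construction is quoted from \cite{MussoPistoia}, and the only points that need care are the bookkeeping that each error term inherits the radial symmetry and the verification that the alternating, scale-separated bubbles force the limiting profiles to be of both signs.
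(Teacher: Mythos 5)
Your proposal is correct and follows essentially the same route as the paper: quote the radial bubble-tower construction of \cite{MussoPistoia} (carried out, as the paper notes, with the general reduction scheme of \cite{gmp} in the radial setting), then convert the projected-bubble decomposition into the profile \eqref{Radpro} with the remainder estimate exactly as in the proof of Lemma \ref{existenceOfTowers}. The extra details you supply (radiality of each error term and the scale-separation argument for the sign change) are consistent with, and merely elaborate on, what the paper leaves to the cited references.
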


\begin{proof}
Combining the ideas in \cite{MussoPistoia} with the general arguments in \cite{gmp} one can prove that
for any integer $k\ge1$
there exists $\eps_k>0$ such that for any $\eps\in(0,\eps_k)$ problem \eqref{problem} has  a stationary solution $\widehat{\phi_\eps}$ whose profile is
\begin{equation}\label{pq1rad}
\widehat{\phi_\eps}(x)=
\sum\limits_{i=1}^k(-1)^i  P_{\Omega_\eps}U_{{\delta_i}_\eps,0}(x)  +\widehat{\psi_\eps}(x),\ x\in\Omega_\eps
\end{equation}
where  the concentration parameters $\delta_1={\delta_1}_\eps,\dots,\delta_k={\delta_k}_\eps$  satisfy
\eqref{radde}
and   the remainder term $\widehat{\psi_\eps}$ is radial and satisfies
\begin{equation}\label{pq2rad}
\|\widehat{\psi_\eps}\|_{H^1_0(\Omega_\eps)}\to0\ \hbox{as}\ \eps\to0.
\end{equation}
The conclusion follows similarly as in the proof of Lemma \ref{existenceOfTowers}.
\end{proof}

\

Performing an asymptotic spectral analysis similarly as in the proof of Proposition \ref{towerVerificaCondizioneIntegraleCasoGEN}  we can now prove the following result and hence conclude

\begin{proposition}\label{towerRadialeVerificaCondizioneIntegraleCasoGEN}
Let $\Omega_{\epsilon}:=\{x\in\mathbb R^N: 0<\epsilon<|x|<1\}$,  $N\in\mathbb N$, $N\geq 3$ and $\epsilon >0$.
Let $\widehat{\phi_{\epsilon}}$ be as in Proposition \ref{existenceOfRadialTowers}.
There exists $\epsilon_0>0$ such that for any $\epsilon\in (0,\epsilon_0)$
\[\int_{\Omega_{\epsilon}}\widehat{\phi_{\epsilon}}\varphi_{1,\epsilon}dx >0,\]
where $\varphi_{1,\epsilon}$ is the positive eigenfunction of the self-adjoint operator $L_{\epsilon}=-\Delta -p_S|\widehat{\phi_{\epsilon}}|^{p_S-1}$ on $L^2(\Omega_{\epsilon})$ with domain $ H^2(\Omega_{\epsilon})\cap H^1_0(\Omega_{\epsilon})$.
\end{proposition}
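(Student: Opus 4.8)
The plan is to repeat, with the obvious modifications, the asymptotic spectral analysis carried out for Proposition~\ref{towerVerificaCondizioneIntegraleCasoGEN}, the only structural simplification being that in \eqref{Radpro} all the bubbles are concentric, centered at the origin. First I would observe that, since $\widehat{\phi_\eps}$ is radial, the potential $p_S|\widehat{\phi_\eps}|^{p_S-1}$ is radial; as the annulus $\Omega_\eps$ is rotation invariant and the first eigenfunction of $L_\eps$ is the unique positive $L^2$-normalized one, it must itself be radial, so $\varphi_{1,\eps}\in H^1_{0,r}(\Omega_\eps)$. Then I would scale around the fastest concentration parameter ${\delta_k}_\eps$: set
\[
\widehat{u}_\eps(x):=(-1)^k{\delta_k}_\eps^{\frac{N-2}{2}}\,\widehat{\phi_\eps}({\delta_k}_\eps\,x),\qquad \widehat{w}_\eps(x):={\delta_k}_\eps^{\frac{N}{2}}\,\varphi_{1,\eps}({\delta_k}_\eps\,x)
\]
on $\widetilde\Omega_\eps:=\Omega_\eps/{\delta_k}_\eps$ (extending $\widehat{w}_\eps$ by zero outside), together with $\widetilde\lambda_\eps:={\delta_k}_\eps^{2}\lambda_\eps$, where $\lambda_\eps$ is the first eigenvalue of $L_\eps$.

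Using \eqref{Radpro}, \eqref{radde} and \eqref{resto}, the computation of Lemma~\ref{bub1} gives $\widehat{u}_\eps=U+\widehat{\rho}_\eps+\widehat{S}_\eps$ with $\|\widehat{\rho}_\eps\|_{L^q(\widetilde\Omega_\eps)}\to0$ for every $q>\frac{2N}{N-2}$ and $\|\widehat{S}_\eps\|_{L^{2N/(N-2)}(\widetilde\Omega_\eps)}\to0$; the bound on $\widehat\rho_\eps$, which collects the rescaled bubbles $U_{{\delta_i}_\eps,0}$ with $i<k$, follows exactly as before from ${\delta_k}_\eps/{\delta_i}_\eps\to0$. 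Moreover $\lambda_\eps<0$ by the test-function argument of Lemma~\ref{lem0} applied to $\widehat{\phi_\eps}$, and $\widehat{w}_\eps$ solves the rescaled eigenvalue problem \eqref{lin2} with $\widetilde\lambda_\eps$ in place of $\tilde\lambda_\eps$ and $\widehat u_\eps$ in place of $\tilde\phi_\eps$. Hence Lemmas~\ref{lem1} and~\ref{lem2} apply verbatim: $\|\widehat{w}_\eps\|_{H^1_0(\widetilde\Omega_\eps)}$ is uniformly bounded, $\int_{\widetilde\Omega_\eps}|f'(\widehat u_\eps)-f'(U)|\widehat{w}_\eps^2\,dx\to0$, $\widetilde\lambda_\eps\to\lambda^*\in(-\infty,0)$ and $\widehat{w}_\eps\to\varphi^*$ strongly in $L^2(\mathbb R^N)$, with $\lambda^*,\varphi^*$ as in Lemma~\ref{3.3}. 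In the auxiliary construction inside the proof of Lemma~\ref{lem2} one only has to use a radial cut-off centered at the origin, so radial symmetry is preserved throughout.

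Finally, multiplying \eqref{problem} (in its stationary form) by $\varphi_{1,\eps}$ and the eigenvalue equation for $\varphi_{1,\eps}$ by $\widehat{\phi_\eps}$, subtracting and integrating over $\Omega_\eps$ yields
\[
\int_{\Omega_\eps}\widehat{\phi_\eps}\,\varphi_{1,\eps}\,dx=-\frac{p_S-1}{\lambda_\eps}\int_{\Omega_\eps}f(\widehat{\phi_\eps})\,\varphi_{1,\eps}\,dx,
\]
so, since $\lambda_\eps<0$, it suffices to prove that ${\delta_k}_\eps\int_{\Omega_\eps}f(\widehat{\phi_\eps})\varphi_{1,\eps}\,dx\to\int_{\mathbb R^N}f(U)\varphi^*\,dx>0$. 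Rescaling turns the left-hand side into $\int_{\widetilde\Omega_\eps}f(\widehat u_\eps)\widehat{w}_\eps\,dx$, and writing $f(\widehat u_\eps)\widehat{w}_\eps-f(U)\varphi^*=[f(\widehat u_\eps)-f(U)]\widehat{w}_\eps+f(U)[\widehat{w}_\eps-\varphi^*]$ one estimates the second term by Hölder's inequality and the $L^2$ convergence of $\widehat{w}_\eps$, and the first by \eqref{yyl} with ${\alpha}=p_S$ together with Hölder's inequality, Lemma~\ref{lem1} and \eqref{bub2}, exactly as in the proof of Proposition~\ref{towerVerificaCondizioneIntegraleCasoGEN}. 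This gives the desired convergence, and the proposition follows.

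As for difficulty, there is essentially no new obstacle here. The only two points that deserve an explicit word are the radiality of $\varphi_{1,\eps}$ (which is what, via the last statement of Proposition~\ref{proposition:comparison}, upgrades the blow-up in Theorem~\ref{teorema:casoRadiale} to \emph{finite}-time blow-up) and the verification that every remainder estimate from Section~\ref{Section:proofGen} retains its form when the bubbles are concentric rather than merely clustered; both are routine. The substantive ingredients, namely the description of the limit spectral problem on $\mathbb R^N$ in Lemma~\ref{3.3} and the radial bubble-tower construction in Lemma~\ref{existenceOfRadialTowers}, are already available.
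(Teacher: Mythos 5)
Your proposal is correct and is exactly what the paper does: the paper's ``proof'' of Proposition~\ref{towerRadialeVerificaCondizioneIntegraleCasoGEN} is precisely the remark that one repeats the asymptotic spectral analysis of Proposition~\ref{towerVerificaCondizioneIntegraleCasoGEN} (scaling around ${\delta_k}_\eps$, Lemmas~\ref{bub1}--\ref{lem2}, then the identity $\int\widehat{\phi_\eps}\varphi_{1,\eps}=-\frac{p_S-1}{\lambda_\eps}\int f(\widehat{\phi_\eps})\varphi_{1,\eps}$ and the limit $\int_{\rr^N}f(U)\varphi^*>0$), with the concentric bubbles only simplifying the remainder estimates. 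One small aside: the radiality of $\varphi_{1,\eps}$ is true but not actually needed anywhere --- the finite-time blow-up in Theorem~\ref{teorema:casoRadiale} comes from the radial symmetry of the initial datum $\lambda\widehat{\phi_\eps}$ (hence of the solution), via the last part of Proposition~\ref{proposition:comparison}, not from symmetry of the eigenfunction.
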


\

\section*{Acknowledgments}
Portions of this research were done while the first author was visiting the second one. The first author would like to thank the {\it Facultad de Matem{\'a}ticas} of the {\it Pontificia Universidad Cat{\'o}lica de Chile} for the warm hospitality during her staying. The authors would like to express their gratitude to Thierry Cazenave for many helpful comments and remarks.
\

\


\begin{thebibliography}{99}


\bibitem{BahriCoron} A. Bahri, J.-M. Coron, \emph{On a nonlinear elliptic equation involving the critical Sobolev exponent: the effect of the topology of the domain}, Comm. Pure Appl. Math. {\bf 41} (1988), 253--294.

\bibitem{CazenaveDicksteinWeissler} T. Cazenave, F. Dickstein, F.B. Weissler,
\emph{Sign-changing stationary solutions and blow up for the nonlinear heat equation in a ball}, Math. Ann. {\bf 344} (2) (2009), 431--449.



\bibitem{Coron}
J.-M. Coron, \emph{Topologie et cas limite des injections de Sobolev}, C.R. Acad. Sc. Paris {\bf 299} s.I (1984),  209--212.


\bibitem{DeMarchisIanni}
 F. De Marchis, I. Ianni, \emph{Blow up of solutions of semilinear heat equations in non radial domains of $\mathbb R^2$}, preprint (2014) (arXiv:1403.0115).



\bibitem{DicksteinPacellaSciunzi} F. Dickstein, F. Pacella, B. Sciunzi,
\emph{Sign-changing stationary solutions and blow up for the nonlinear heat equation in dimension two}, preprint (2013) (arXiv:1304.2571).

\bibitem{GK} V.A. Galaktionov, J.R. King, {\em Composite structure of global unbounded
solutions of nonlinear heat equations with critical
Sobolev exponents}, J. Differential Equations {\bf 189} (2003),  199–-233.

\bibitem{GV} V.A. Galaktionov, J.L. Vazquez, {\em Continuation of blowup solutions of nonlinear heat equations in several space dimensions}, Comm. Pure Appl. Math. {\bf 50} (1997), 1--67.


\bibitem{GazzolaWeth} F. Gazzola, T. Weth, \emph{Finite-time blow-up and global solutions for semilinear parabolic equations with initial data at high energy levels}, Differ. Integral Equ. {\bf 8} (9) (2005), 961--990.

\bibitem{gmp} Y. Ge, M.  Musso, A. Pistoia,
{\em Sign Changing Tower of Bubbles for an Elliptic Problem at the Critical Exponent in Pierced Non-Symmetric Domains,
}, Comm. PDE {\bf 35} (2010), 1--39.


\bibitem{Ishiwata}
M. Ishiwata, \emph{On bounds for global solutions of semilinear parabolic equations with critical and subcritical Sobolev exponent}, Differ. Integral Equ. {\bf 20} (9) (2007), 1021--1034.


\bibitem{KazdanWarner}
J.L. Kazdan, F.W. Warner, {\em Remarks on some quasilinear elliptic equations}, Comm. Pure Appl. Math. {\bf 28} (1975), 567–-597.

\bibitem{Lions} P.-L. Lions, {\em Asymptotic behavior of some nonlinear heat equations}, Nonlinear Phenomena Phys. D {\bf 5} (1982), 293--306.

\bibitem{mps} V. Marino, F. Pacella, B. Sciunzi,
{\em Blow up of solutions of semilinear heat equations in general domains
} Comm. in Contemp. Math., doi: 10.1142/S0219199713500429.

\bibitem{MussoPistoia}
M. Musso, A. Pistoia, {\em Sign changing solutions to a nonlinear elliptic problem involving the critical Sobolev exponent in pierced domains}, J. Math. Pures Appl. {\bf 86} (2006), 510--528.

\bibitem{NST} W.M. Ni, P.E. Sacks, J. Tavantzis, {\em On the asymptotic behavior of solutions of certain quasilinear parabolic equations}, J. Diff. Eqs. {\bf 54} (1984), 97--120.


\bibitem{Pohozaev} S.I. Pohozaev, \emph{Eigenfunctions of the equation $\Delta u+\lambda f(u)=0$}, Soviet Math. Dokl. {\bf 5} (1965), 1408-1411.


\bibitem{quittnersouplet} P. Quittner, P. Souplet,
\emph{Superlinear parabolic problems}, Birkh\"auser
Basel-Boston-Berlin  2007.

\bibitem{Suzuki} T. Suzuki, {\em Semilinear parabolic equation on bounded domain with critical Sobolev exponent}, Indiana Univ. Math. J. {\bf 57} (7) (2008), 3365--3396.

\end{thebibliography}
\end{document}